\DeclareMathAlphabet{\mathbbold}{U}{bbold}{m}{n}
\newcommand\one{\mathbbold{1}}  
\newcommand\zero{\mathbbold{0}}  
\let\SavedRightarrow=\Rightarrow
\let\Rightarrow=\SavedRightarrow
\newtheorem{theorem}{Theorem}[section]
\newtheorem{definition}[theorem]{Definition}
\newtheorem{lemma}[theorem]{Lemma}
\newtheorem{corollary}[theorem]{Corollary}
\newtheorem{question}[theorem]{Question}
\newtheorem{notation}[theorem]{Notation}
\newtheorem{example}[theorem]{Example}
\newcommand\CC{\mathcal {C}}
\newcommand\SSS{\mathcal {S}}
\newcommand\FF{\mathcal {F}}
\newcommand\EE{\mathcal {E}}
\newcommand\GG{\mathcal {G}}
\newcommand\PP{\mathcal {P}}
\newcommand\RR{\mathcal {R}}
\newcommand\WW{\mathcal {W}}
\newcommand\BB{\mathcal {B}}
\newcommand\AAA{\mathcal {A}}
\newcommand\stsp{\mathrm{st}}   
\newcommand\ZFC{\mathrm{ZFC}}  
\newcommand\CH{\mathrm{CH}}  
\newcommand\GCH{\mathrm{GCH}}  
\newcommand\sa{\mathrm{sa}}   
\newcommand\Irrmm{\mathrm{Irr}_{\mathrm{mm}}}
\newcommand\hgt{\mathrm{ht}}   
\newcommand\bbbb{\mathfrak{b}}
\newcommand\dddd{\mathfrak{d}}
\newcommand\rrrr{\mathfrak{r}}
\newcommand\CCCC{\mathfrak{C}}
\newcommand\BBBB{\mathfrak{B}}
\newcommand\DDDD{\mathfrak{D}}
\newcommand\pre[2]{ {}^{#1} #2 }
\newcommand\sym{\mathrel{\Delta}} 
\newcommand\Fn{\mathrm{Fn}}      
\newcommand\cchi{{\raise 2 pt \hbox{$\chi$}}}
\newcommand\eop{{\Large \Coffeecup}}  
\newenvironment{proof}{{\bf Proof.}}{\eop\medskip}
\newenvironment{proofof}[1]{\medskip \textbf{Proof of #1.}}{\eop\medskip}
\begin{document}

\title{Irredundant Sets in Atomic Boolean Algebras\footnote{
2010 Mathematics Subject Classification:
Primary  06E05, 03E35, 03E17.
Key Words and Phrases:
boolean algebra, irredundant set, reaping number. }}

\author{Kenneth Kunen\footnote{University of Wisconsin, 
Madison, WI  53706, U.S.A., \ \ kunen@math.wisc.edu} }

\maketitle

\begin{abstract}
Assuming $\GCH$, we construct an atomic boolean algebra
whose pi-weight is strictly less than the least size of a
maximal irredundant family.
\end{abstract}

\section{Introduction}
We begin by reviewing some standard notation regarding boolean algebras.
Koppelberg \cite{Kop} and Monk \cite{Monk_text, Monk_paper} contain
much more information.

\begin{notation}
In this paper, the three calligraphic letters $\AAA, \BB,\CC$
will always denote boolean algebras; in particular,
$\AAA$ will always denote a finite-cofinite algebra.
Other calligraphic letters denote subsets of boolean algebras.
$b'$ denotes the boolean complement of $b$.
$\BB \subseteq \CC$ means that $\BB$ is a sub-algebra of $\CC$,
and $\BB \subset \CC$ or
$\BB \subsetneqq \CC$ means that $\BB$ is a \emph{proper} sub-algebra of $\CC$.
Also, $\stsp(\BB)$ denotes the Stone space of $\BB$.
\end{notation}

The symbols $\subset$ and $\subsetneqq$ are synonymous,
but we shall use $\subset$ when the properness is obvious;
e.g., ``let $\BB \subset \PP(\omega)$ be a countable sub-algebra and
$\cdots\cdots$''.

Some further notation is borrowed either
from topology (giving properties of $\stsp(\BB)$) or
from forcing (regarding $\BB \backslash \{\zero\}$ as a forcing poset).

\begin{definition}
If $\SSS \subseteq \BB$, then $\SSS$ is \emph{dense in} $\BB$ iff
$\forall b \in \BB \backslash \{\zero\} \;
\exists d \in \SSS \backslash \{\zero\} \; [d \le b]$.
\end{definition}

\noindent
In forcing, we would say that
$\SSS \backslash \{\zero\}$ is dense in  $\BB \backslash \{\zero\}$.

\begin{definition}
The \emph{pi-weight}, $\pi(\BB)$, is the least size of
a dense subset $\SSS \subseteq \BB$.
\end{definition}

\noindent
This is the same as the topological notion $\pi(\stsp(\BB))$.

\begin{definition}
If  $\EE \subseteq \BB$,
then $\sa(\EE)$ is the sub-algebra of $\BB$ generated by $\EE$.
\end{definition}

\noindent
Note that $\sa(\emptyset) = \{\zero, \one\}$.
The notation $\langle \EE \rangle$ is more common in the literature, but we
shall frequently use the angle brackets to denote sequences.

If $\EE$ is a set of non-zero vectors in
a vector space, then $\EE$ is linearly 
independent iff no $a \in \EE$ is generated from the other elements of $\EE$;
equivalently, iff no non-trivial linear combination from $\EE$
is zero.  In boolean algebras,
these two notions are not equivalent, and are named, respectively,
``irredundance'' and ``independence'':

\begin{definition}
$\EE \subseteq \BB$ is \emph{irredundant} iff 
$a \notin \sa(\EE \backslash \{a\})$ for all $a \in \EE$.
\end{definition}

\begin{definition}
For $a \in \BB$:  $a^1 = a'$ and $a^0 = a$.
Then, $\EE \subseteq \BB$ is \emph{independent} iff 
for all $n \in \omega$ and all distinct $a_0, \ldots , a_{n-1} \in \EE$
and all $\epsilon \in \pre{n}{2}$,
$\bigwedge_{i < n} a_i^{\epsilon(i)} \ne \zero$.
\end{definition}

Some remarks that follow easily from the definitions:
Every independent set is irredundant.
If $\EE$ is a chain and $|\EE| \ge 2$ and $\zero, \one \notin \EE$,
then $\EE$ is irredundant but not independent.
No irredundant set can contain $\zero$ or $\one$
because $\zero,\one \in \sa(\GG)$ for every $\GG$,
even if $\GG = \emptyset$.

Irredundance and independence are similar in that they 
treat an element and its complement equivalently:

\begin{lemma}
\label{lemma-replace-comp}
Fix $\EE \subseteq \BB \backslash \{\zero, \one\}$.
If $b, b' \in \EE$, then $\EE$ is neither
irredundant nor independent.
If $b \in \EE$ and $b' \notin \EE$ and $\tilde \EE$ is obtained
from $\EE$ by replacing $b$ by $b'$,
then $\EE$ is irredundant iff $\tilde \EE$ is irredundant and
$\EE$ is independent iff $\tilde \EE$ is independent.
\end{lemma}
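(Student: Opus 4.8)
The plan is to prove the three assertions of Lemma~\ref{lemma-replace-comp} in order, the first being essentially trivial and the other two following from a single observation about generated subalgebras.

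\medskip

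\textbf{First assertion.} If $b, b' \in \EE$, then $b' \in \EE \backslash \{b\} \subseteq \sa(\EE \backslash \{b\})$, so $b$ is redundant and $\EE$ is not irredundant; since every independent set is irredundant, $\EE$ is not independent either. (One might also note directly that $b \wedge (b')^1 = b \wedge b = b \ne \zero$ fails to witness anything, but the wedge $b^0 \wedge (b')^0 = b \wedge b' = \zero$ does violate independence with $n = 2$, $a_0 = b$, $a_1 = b'$, $\epsilon = \langle 0, 0\rangle$ — so independence fails by direct appeal to the definition as well.)

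\medskip

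\textbf{Key observation.} For any $\GG \subseteq \BB$ and any $b$ with $b, b' \notin \GG$, I claim $\sa(\GG \cup \{b\}) = \sa(\GG \cup \{b'\})$. This is immediate: $b'$ is generated from $b$ (and vice versa) using only the complement operation, so each of the two sets on the left contains the generators of the other, hence they generate the same subalgebra. Apply this with $\GG = \tilde\EE \backslash \{b'\} = \EE \backslash \{b\}$: we get $\sa(\EE) = \sa(\tilde\EE)$, and more usefully, for each element $a$ common to $\EE$ and $\tilde\EE$ (i.e.\ each $a \in \EE \backslash \{b\}$), $\sa(\EE \backslash \{a\}) = \sa(\tilde\EE \backslash \{a\})$, because $\EE \backslash \{a\}$ and $\tilde\EE \backslash \{a\}$ differ only by swapping $b$ for $b'$ and neither $b$ nor $b'$ lies in the common part $\EE \backslash \{a, b\}$.

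\medskip

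\textbf{Second assertion (irredundance).} Suppose $\EE$ is irredundant; I show $\tilde\EE$ is. Take $a \in \tilde\EE$. If $a \ne b'$, then $a \in \EE$ and by the observation $\sa(\tilde\EE \backslash \{a\}) = \sa(\EE \backslash \{a\})$, which does not contain $a$; done. If $a = b'$, I must show $b' \notin \sa(\tilde\EE \backslash \{b'\}) = \sa(\EE \backslash \{b\})$. But $\sa(\EE \backslash \{b\})$ is closed under complement, so it contains $b'$ iff it contains $b$; since $\EE$ is irredundant, $b \notin \sa(\EE \backslash \{b\})$, hence $b' \notin \sa(\EE \backslash \{b\})$. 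This proves $\tilde\EE$ irredundant. The converse follows by symmetry, since $\EE$ is obtained from $\tilde\EE$ by the same kind of swap (replacing $b'$ by $(b')' = b$).

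\medskip

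\textbf{Third assertion (independence).} The cleanest route is to invoke Lemma~\ref{lemma-replace-comp}'s own definition: independence of $\EE$ says $\bigwedge_{i<n} a_i^{\epsilon(i)} \ne \zero$ for all finite distinct tuples from $\EE$ and all $\epsilon$. Given a finite distinct tuple from $\tilde\EE$, either $b'$ does not occur, in which case the tuple lies in $\EE$ and the condition is inherited directly; or $b'$ occurs, say as $a_j = b'$, and then $(a_j)^{\epsilon(j)} = (b')^{\epsilon(j)} = b^{1-\epsilon(j)}$, so the wedge equals a wedge over the corresponding tuple from $\EE$ (with $b$ in place of $b'$) for the modified sign function $\tilde\epsilon$ that flips the $j$-th coordinate — and that wedge is nonzero by independence of $\EE$. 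Thus $\tilde\EE$ is independent, and again the converse is symmetric.

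\medskip

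\textbf{Anticipated obstacle.} There is no deep obstacle here; the only thing to be careful about is bookkeeping — making sure that when we delete an element $a$ and also swap $b$ for $b'$, the element being deleted is not $b$ or $b'$ in a way that collides with the swap, and that the ``symmetry'' argument for the converses is genuinely legitimate (it is, because $b' \notin \EE$ guarantees $(b')' = b \notin \tilde\EE \backslash$-the-relevant-set in the required sense). I would present the key observation as a displayed remark and then dispatch all three assertions in a few lines each.
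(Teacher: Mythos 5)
Your proof is correct, and it follows the only natural route: the paper states Lemma~\ref{lemma-replace-comp} without proof, as one of the facts that ``follow easily from the definitions,'' so there is no authorial argument to diverge from. Your key observation that $\sa(\GG \cup \{b\}) = \sa(\GG \cup \{b'\})$ (closure of subalgebras under complementation) together with the exponent-flip $(b')^{\epsilon} = b^{1-\epsilon}$ is exactly the intended content, and the symmetry argument for the converses is legitimate since $b = (b')' \notin \tilde\EE$.
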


Monk \cite{Monk_paper} defines:

\begin{definition}
$\Irrmm(\BB)$ is the 
\emph{m}inimum size of a \emph{m}aximal irredundant subset of $\BB$.
\end{definition}

The following provides a simple way to prove 
maximal irredundance:

\begin{lemma}
\label{lemma-irred-generate}
If $\EE \subseteq \BB$ is irredundant and $\sa(\EE) = \BB$,
then $\EE$ is maximally irredundant in $\BB$.
\end{lemma}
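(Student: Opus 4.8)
The plan is to argue directly from the definition of maximal irredundance: a set is maximally irredundant in $\BB$ iff it is irredundant and no irredundant subset of $\BB$ properly contains it. So I would suppose, toward a contradiction, that $\EE$ is irredundant with $\sa(\EE) = \BB$, yet there is an irredundant $\FF$ with $\EE \subsetneqq \FF \subseteq \BB$.

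First I would record the elementary monotonicity fact that every subset of an irredundant set is irredundant: if $\GG \subseteq \FF$ and $\FF$ is irredundant, then for each $a \in \GG$ we have $\GG \backslash \{a\} \subseteq \FF \backslash \{a\}$, hence $\sa(\GG \backslash \{a\}) \subseteq \sa(\FF \backslash \{a\})$, and since $a \notin \sa(\FF \backslash \{a\})$ we conclude $a \notin \sa(\GG \backslash \{a\})$. Using this, fix any $c \in \FF \backslash \EE$; then $\EE \cup \{c\} \subseteq \FF$ is irredundant.

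Now I apply the irredundance of $\EE \cup \{c\}$ to the element $c$ itself. This demands $c \notin \sa\big((\EE \cup \{c\}) \backslash \{c\}\big)$. But $c \notin \EE$, so $(\EE \cup \{c\}) \backslash \{c\} = \EE$, and the demand becomes $c \notin \sa(\EE) = \BB$, contradicting $c \in \BB$. Hence no such $\FF$ exists, so $\EE$ is maximally irredundant.

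There is essentially no genuine obstacle here; the only points requiring a little care are the reduction from an arbitrary proper irredundant superset to a one-element extension $\EE \cup \{c\}$ — which is precisely where the monotonicity remark is used — and the bookkeeping observation that $c \notin \EE$, so that deleting $c$ from $\EE \cup \{c\}$ returns exactly $\EE$ and thus exposes the generation $c \in \sa(\EE)$.
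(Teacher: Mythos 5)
Your proof is correct, and it is exactly the immediate argument the paper has in mind when it states this lemma without proof: any $c \in \BB \backslash \EE$ satisfies $c \in \sa(\EE)$, so $\EE \cup \{c\}$ cannot be irredundant. The reduction via monotonicity of irredundance under subsets and the observation that $(\EE \cup \{c\}) \backslash \{c\} = \EE$ are both handled properly.
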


The following provides a simple way to refute maximal irredundance.
It is attributed to McKenzie in 
Koppelberg \cite{Kop} (see Proposition 4.23):

\begin{lemma}
\label{lemma-irred-dense}
Assume that $\EE \subseteq \BB$, where
$\EE$ is irredundant.  Fix $d \in \BB \backslash \sa(\EE)$ such that
$\forall a \in \sa(\EE)\, [a \le d \to a = \zero]$.
Then $\EE \cup \{d\}$ is irredundant.
In particular, if 
$\EE$ is maximally irredundant in $\BB$, then
$\sa(\EE) $ is dense in $\BB $.
\end{lemma}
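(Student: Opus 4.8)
The plan is to verify both assertions directly from the definitions. For the first, I must show that no element of $\EE \cup \{d\}$ belongs to the subalgebra generated by the remaining elements. Since $d \in \BB \backslash \sa(\EE)$ and $\sa(\EE) = \sa\big((\EE \cup \{d\}) \backslash \{d\}\big)$, the element $d$ is not redundant; note also $d \notin \EE$, as $\EE \subseteq \sa(\EE)$. So it remains to fix $a \in \EE$ and prove $a \notin \sa\big((\EE \backslash \{a\}) \cup \{d\}\big)$.

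Write $\FF = \EE \backslash \{a\}$. I would first record the normal form for a one-element extension: every element of $\sa(\FF \cup \{d\})$ can be written as $(c_0 \wedge d) \vee (c_1 \wedge d')$ with $c_0, c_1 \in \sa(\FF)$. This holds because the set of elements of that shape contains $\FF$, contains $d$, contains $\zero$ and $\one$, and is closed under complementation and finite joins --- each closure fact being an application of the distributive laws.

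Now suppose toward a contradiction that $a = (c_0 \wedge d) \vee (c_1 \wedge d')$ for some $c_0, c_1 \in \sa(\FF)$. The crucial computation is the identity $a \sym c_1 = (c_0 \sym c_1) \wedge d$, obtained by expanding the symmetric difference via the distributive laws (intuitively, $a$ agrees with $c_1$ on the ``$d'$ part'', so the discrepancy is carried by $d$). In particular $a \sym c_1 \le d$. But $a \in \EE \subseteq \sa(\EE)$ and $c_1 \in \sa(\FF) \subseteq \sa(\EE)$, so $a \sym c_1 \in \sa(\EE)$; the hypothesis on $d$ then forces $a \sym c_1 = \zero$, i.e.\ $a = c_1 \in \sa(\FF) = \sa(\EE \backslash \{a\})$, contradicting the irredundance of $\EE$. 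Hence $\EE \cup \{d\}$ is irredundant.

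For the ``in particular'' clause, assume $\EE$ is maximally irredundant but $\sa(\EE)$ is not dense in $\BB$, so there is $b \in \BB \backslash \{\zero\}$ with no nonzero element of $\sa(\EE)$ below it. Then $b \notin \sa(\EE)$ (else $b \le b$ would be such a witness) and every $a \in \sa(\EE)$ with $a \le b$ equals $\zero$; so the first part applies with $d = b$, making $\EE \cup \{b\}$ irredundant, and since $b \notin \sa(\EE) \supseteq \EE$ this strictly extends $\EE$ --- contradicting maximality. I anticipate no real obstacle: the only delicate point is selecting, between the two coefficients in the normal form, the one ($c_1$) whose symmetric difference with $a$ simultaneously lies below $d$ and inside $\sa(\EE)$, together with the single identity $a \sym c_1 = (c_0 \sym c_1) \wedge d$; everything else is routine bookkeeping.
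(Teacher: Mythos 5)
Your proof is correct and follows essentially the same route as the paper's: both write the hypothetical redundancy witness in the normal form $(c_0\wedge d)\vee(c_1\wedge d')$, observe that $a$ and the $d'$-coefficient agree off $d$ so that $a\sym c_1\le d$, and invoke the density hypothesis to conclude $a=c_1\in\sa(\EE\backslash\{a\})$. The only differences are cosmetic --- you spell out the trivial case that $d$ itself cannot be the redundant element and the derivation of the ``in particular'' clause, which the paper leaves implicit.
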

\begin{proof}
Assume that $\EE \cup \{d\}$ is not irredundant.
Then there are distinct
$a, a_1, \ldots a_n \in \EE$ such
that $a \in \sa \{ a_1, \ldots a_n , d \}$.
Then, fix $u,w \in \sa \{ a_1, \ldots a_n \}$ such that
$a = (u \wedge d) \vee (w \wedge d')$.
Now, $a \wedge d' = w \wedge d'$, so $a \sym w \le d$,
so $a \sym w = \zero$.  Then $a = w \in \sa \{ a_1, \ldots a_n \}$,
contradicting irredundance of $\EE$.
\end{proof}

\begin{corollary}
When $\BB$ is infinite,
$\pi(\BB) \le \Irrmm(\BB) \le |\BB| \le  2^{\pi(\BB)}$.
\end{corollary}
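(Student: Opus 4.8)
The plan is to establish the displayed chain of inequalities from left to right. Three of the four comparisons are essentially bookkeeping; the one requiring a small observation is $\pi(\BB) \le \Irrmm(\BB)$.

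For $\pi(\BB) \le \Irrmm(\BB)$, I would first note that a maximal irredundant subset of $\BB$ exists: by the definition of irredundance, $\EE$ fails to be irredundant exactly when some finite subset of $\EE$ fails to be irredundant, so irredundance has finite character and Zorn's lemma applies to the poset of irredundant subsets of $\BB$ (which contains $\emptyset$). Now fix a maximal irredundant $\EE$ with $|\EE| = \Irrmm(\BB)$. By Lemma \ref{lemma-irred-dense}, $\sa(\EE)$ is dense in $\BB$, so $\pi(\BB) \le |\sa(\EE)|$. The key point is that $\EE$ must be infinite: otherwise $\sa(\EE)$ is a finite dense subalgebra of $\BB$, its finitely many atoms are then atoms of $\BB$ that partition $\one$, and $\BB = \sa(\EE)$ would be finite, contrary to hypothesis. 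Since a subalgebra generated by an infinite set $\EE$ has cardinality $|\EE|$, we get $\pi(\BB) \le |\sa(\EE)| = |\EE| = \Irrmm(\BB)$.

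The inequality $\Irrmm(\BB) \le |\BB|$ is then immediate, since a maximal irredundant set exists and every subset of $\BB$ has size at most $|\BB|$. For $|\BB| \le 2^{\pi(\BB)}$, fix a dense $\DD \subseteq \BB$ with $|\DD| = \pi(\BB)$ and define $f \colon \BB \to \PP(\DD)$ by $f(b) = \{ d \in \DD : \zero \ne d \le b \}$. This map is injective: if $b \ne c$ then $b \sym c \ne \zero$, so at least one of $b \wedge c'$ and $c \wedge b'$ is nonzero; density provides a nonzero $d \in \DD$ below that element, and one checks that $d$ belongs to exactly one of $f(b), f(c)$. Hence $|\BB| \le 2^{|\DD|} = 2^{\pi(\BB)}$.

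The only step needing genuine thought is the infinitude of maximal irredundant sets inside the proof of the first inequality --- equivalently, the remark that no finite subalgebra can be dense in an infinite boolean algebra. The remaining estimates are routine cardinal arithmetic.
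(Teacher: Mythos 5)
Your proof is correct and follows the same route as the paper: the only non-trivial inequality is $\pi(\BB)\le\Irrmm(\BB)$, which both you and the paper obtain from Lemma \ref{lemma-irred-dense} together with the observation that a dense subalgebra of an infinite $\BB$ must be infinite, so that $\pi(\BB)\le|\sa(\EE)|=|\EE|$. You simply make explicit some details the paper leaves to the reader (Zorn's lemma via finite character, and the injection $b\mapsto\{d\in\DD:\zero\ne d\le b\}$ for $|\BB|\le 2^{\pi(\BB)}$).
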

\begin{proof}
For the first $\le$:  If $\EE$  is maximally irredundant in $\BB$, then
$\sa(\EE)$ must be infinite (since it is dense in $\BB$), so 
$\pi(\BB) \le |\sa(\EE)| = |\EE| $.
\end{proof}

Note that Lemma \ref{lemma-irred-dense}
does not say that $\EE$ must be dense in $\BB$,
and the first $\le$ can fail for finite $\BB$.
For example, let $\BB = \PP(4)$ be the $16$ element boolean algebra.
If $\EE = \{a,b\}$ is an independent set
(e.g., $a = \{0,1\}$ and $b = \{1,2\}$),
then $\sa(\EE) = \BB$.
So, $\EE$ is a maximal irredundant set, showing that
$\Irrmm(\BB) = 2$, although $\pi(\BB) = 4$.
Also, let $\FF $ be the set of the four atoms (singletons).
Then $\sa(\FF) = \BB$.  So, $\FF$ is a maximal irredundant set.

Since there can be maximal irredundant sets of
different sizes in $\BB$, there is no simple notion of ``dimension''
as in vector spaces.
This phenomenon can occur in infinite $\BB$ as well:

\begin{example}
\label{ex-max-irr}
Let $\BB = \PP(\kappa)$, where $\kappa$ is any infinite cardinal.
Then $\BB$ has maximal irredundant set of size $2^\kappa$, but
$\pi(\BB) = \Irrmm(\BB) = \kappa$.
\end{example}
\begin{proof}
Following Hausdorff \cite{Hau},
let $\EE$ be an independent set of size $2^\kappa$;
then $\EE$ is irredundant and is contained in a maximal irredundant set.
To prove that $\Irrmm(\BB) = \kappa$:  As in 
\cite{Monk_paper}, let $\FF = \kappa\backslash \{0\}$;
that is, $\FF$ is the set of all proper
initial segments of $\kappa$.
$\FF$ is a chain, and hence irredundant.  To prove maximality,
fix $c \in \PP(\kappa) \backslash \sa(\FF)$; we
show that $\FF \cup \{c\}$ is not irredundant.  By Lemma
\ref{lemma-replace-comp}, WLOG $0 \in c$
(otherwise, replace $c$ by $c'$).  Let $\delta$ be the
least ordinal not in $c$.  Then $\delta , \delta + 1 \in \FF$
and $\delta = c \cap (\delta + 1)$, refuting irredundance.
\end{proof}

In view of examples like this, Monk \cite{Monk_paper} asks (Problem 1):

\begin{question}
\label{question_Irrmm}
Does $\Irrmm(\BB) = \pi(\BB)$ for every infinite $\BB$?
\end{question}

Assuming $\GCH$, the answer is ``no'':

\begin{theorem}
\label{thm-pi-irred}
If $2^{\aleph_1} = \aleph_2$, then there is an atomic boolean algebra
$\BB$ such that $\pi(\BB) = \aleph_1 < \Irrmm(\BB)$.
\end{theorem}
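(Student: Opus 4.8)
The plan is to realise $\BB$ as a subalgebra of $\PP(\omega_1)$ that contains $\Fn(\omega_1)$, the finite-cofinite algebra on $\omega_1$, and has cardinality $\aleph_2$. Any such $\BB$ is automatically atomic, its atoms being the singletons. Moreover, in an atomic algebra the set of atoms is dense and is contained in every dense set, so $\pi(\BB)$ equals the number of atoms; hence $\pi(\BB)=\aleph_1$ is automatic, and $\Irrmm(\BB)\le|\BB|=\aleph_2$. It therefore suffices to arrange that $\BB$ has \emph{no} maximal irredundant set of size $\le\aleph_1$: since an algebra of size $\aleph_1$ does have such a set (Zorn), this both forces $|\BB|=\aleph_2$ and yields $\Irrmm(\BB)=\aleph_2>\aleph_1=\pi(\BB)$.

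By Lemma~\ref{lemma-irred-dense}, a maximal irredundant $\EE\subseteq\BB$ has $\sa(\EE)$ dense in $\BB$, hence---by atomicity---$\sa(\EE)\supseteq\Fn(\omega_1)$. So I would build $\BB=\bigcup_{\xi<\omega_2}\BB_\xi$ as an increasing continuous chain of subalgebras with $\Fn(\omega_1)\subseteq\BB_0$ and $|\BB_\xi|\le\aleph_1$ for all $\xi$; here $2^{\aleph_1}=\aleph_2$ is used so that one bookkeeping of length $\omega_2$ can present every pair $(\xi,\EE)$ with $\EE\subseteq\BB_\xi$. When $(\xi,\EE)$ is presented at a stage $\eta\ge\xi$, and $\EE$ is irredundant with $\Fn(\omega_1)\subseteq\sa(\EE)$, I would choose a set $d\in\PP(\omega_1)\setminus\sa(\EE)$ with $\EE\cup\{d\}$ still irredundant and set $\BB_{\eta+1}=\sa(\BB_\eta\cup\{d\})$; otherwise nothing is done at that stage. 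Irredundance of a fixed set is absolute between an algebra and any extension of it, and every $\EE\subseteq\BB$ of size $\le\aleph_1$ lies in some $\BB_\xi$ because $\aleph_1<\cf(\omega_2)$; so every candidate for a small maximal irredundant set is genuinely enlarged inside $\BB$, and none survives. (An irredundant set whose generated subalgebra omits $\Fn(\omega_1)$ is non-maximal for free, again by Lemma~\ref{lemma-irred-dense}.)

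The heart of the matter is the \emph{extension step}: given $\CC=\sa(\EE)$, a subalgebra of $\PP(\omega_1)$ of size $\le\aleph_1$ containing $\Fn(\omega_1)$, find $d\in\PP(\omega_1)\setminus\CC$ with $\EE\cup\{d\}$ irredundant. This \emph{cannot} be done for an arbitrary such $\CC$: by the argument in Example~\ref{ex-max-irr}, if $\EE$ is the set of proper initial segments of $\omega_1$ then $\EE\cup\{d\}$ is irredundant for \emph{no} $d\in\PP(\omega_1)$. So the construction must carry an \emph{invariant} on the $\BB_\xi$ that excludes such rigid configurations (uncountable chains and their relatives), that (i) survives unions at limits, (ii) survives adjoining the chosen $d$, and (iii) makes the extension step possible. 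For (iii), a computation in the spirit of the proof of Lemma~\ref{lemma-irred-dense} shows that $a\in\sa((\EE\setminus\{a\})\cup\{d\})$ exactly when there are $u,w\in\sa(\EE\setminus\{a\})$ with $u\wedge w\le a\le u\vee w$ and $d$ agreeing, on $u\sym w$, with a set determined by $a,u,w$. Thus one wants $d$ which, for each $a\in\EE$, ``reaps'' $\sa(\EE\setminus\{a\})$ strongly enough to miss all such prescribed patterns; since in total there are only $\le\aleph_1$ constraints on $d$, this is a reaping-type problem, solvable---the pertinent reaping number being $\ge\aleph_2$---provided the invariant holds on $\BB_\eta$.

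The main obstacle is accordingly to isolate the correct invariant and to verify simultaneously that it is preserved by the construction---above all under the adjunction of each new $d$, which must therefore be chosen with care rather than merely generically---and that it delivers the extension step. Everything else (the atomicity/$\pi(\BB)$ computation, the bookkeeping, absoluteness of irredundance, and the final cardinality count) is routine.
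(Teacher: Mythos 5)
Your outline has the right global shape (an increasing chain of subalgebras of $\PP(\omega_1)$ of length $\omega_2$ containing the finite--cofinite algebra, with each small irredundant candidate eventually extended so that no maximal irredundant set of size $\le\aleph_1$ survives), and you correctly diagnose that the whole difficulty is the extension step, which provably fails for some $\EE$ (e.g.\ the chain of initial segments). But that is where you stop: the ``invariant'' and the ``extension step'' that you defer are precisely the mathematical content of the theorem, and neither is supplied. The paper's invariant is \emph{dichotomicity}: every element of $\BB$ is finite, cofinite, or has both itself and its complement uncountable. This is what excludes the rigid configurations --- by Lemma~\ref{lemma-contains-club}, if $\BB$ even contains a club then $\Irrmm(\BB)=\aleph_1$, so no invariant weaker than something banning countably infinite elements can work. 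The sets $d$ adjoined are not chosen by diagonalizing against $\le\aleph_1$ ``patterns'' on the sets $u\sym w$ (your reaping sketch does not obviously close, since the constraints live on overlapping finite sets and need not be simultaneously satisfiable without structural input); they are \emph{blockish} sets for clubs, i.e.\ unions of blocks of the partition induced by a club. The key extension lemma (Lemma~\ref{lemma-block-irred}) says: if $\BB$ is dichotomous with $|\BB|<\bbbb_{\omega_1}$ and $\EE\subseteq\BB$ is irredundant, then there is a club $D$, obtained from a nice chain of elementary submodels with $\EE$ in the first model, such that \emph{every} $c$ blockish for $D$ keeps $\EE\cup\{c\}$ irredundant and preserves dichotomicity. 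Its proof is a genuinely nontrivial argument via the minimal finite covers $h(\delta,e)\in\sa(\EE\setminus\{e\})$ and the heights $\hgt(\delta)$, showing that if $a=(u\cap c)\cup(w\cap c')$ then $s\cap c\in\sa(\EE\setminus\{a\})$ where $s=u\sym w$ is finite, contradicting irredundance of $\EE$.

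A second, structural difference: the paper does \emph{not} bookkeep over pairs $(\xi,\EE)$. It builds $\BB$ by enumerating a dominating family of $\dddd_{\omega_1}$ clubs and adding, for each, a blockish set chosen (by a reaping argument, which is where $\rrrr_{\omega_1}\ge\dddd_{\omega_1}$ enters) to split all uncountable members of the algebra so far, thereby preserving dichotomicity. The property finally obtained is purely structural --- $\BB$ is dichotomous and contains a blockish set for every club --- and Lemmas~\ref{lemma-dichot} and~\ref{lemma-dich-block} then show in $\ZFC$ that any such $\BB$ has $\Irrmm(\BB)\ge\bbbb_{\omega_1}$. This decoupling is what lets the paper prove the stronger Theorem~\ref{thm-pi-irred-better} under $\rrrr_{\omega_1}\ge\dddd_{\omega_1}$ with $|\BB|=\dddd_{\omega_1}$, rather than only under $2^{\aleph_1}=\aleph_2$. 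So while your peripheral claims (atomicity, $\pi(\BB)=\aleph_1$, the Zorn argument forcing $\Irrmm(\BB)=\aleph_2$) are fine, the proposal as it stands is a plan with the decisive lemma missing rather than a proof.
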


We do not know whether the hypothesis ``$2^{\aleph_1} = \aleph_2$''
can be eliminated here, although it can be weakened quite a bit,
as we shall see from the proof of Theorem \ref{thm-pi-irred}
in Section \ref{sec-proofs}.
This weakening (described in Theorem \ref{thm-pi-irred-better})
is expressed in terms of some cardinals,
such as $\bbbb_{\omega_1}$,  $\dddd_{\omega_1}$, etc.,
that are obtained by replacing $\omega$ by $\omega_1$ in
the definitions of the standard cardinal characteristics of the continuum,
such as $\bbbb$,  $\dddd$, etc.
These cardinals are discussed further in Section \ref{sec-small},
which also uses them to give some lower bounds to
the size of a $\BB$ that can possibly satisfy Theorem \ref{thm-pi-irred}.
Section~\ref{sec-remarks} contains some preliminary observations
on atomic boolean algebras.

\section{Remarks on Atomic Boolean Algebras}
\label{sec-remarks}

We are trying to find an atomic $\BB$ that answers
Monk's Question \ref{question_Irrmm} in the negative; that is,
such that $\pi(\BB) < \Irrmm(\BB)$.  Observe first:

\begin{lemma}
\label{lemma-atomic}
If $\BB$ is infinite and atomic and $\kappa = \pi(\BB)$,
then $\kappa$ is the number of atoms, and $\kappa$ is infinite,
and $\BB \cong \widetilde \BB$, where
$\AAA \subseteq \widetilde \BB \subseteq \PP(\kappa)$, and
$\AAA$ is the finite-cofinite algebra on $\kappa$.
\end{lemma}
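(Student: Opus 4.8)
The plan is to pass to the set $\mathrm{At}$ of atoms of $\BB$ and to study the canonical map $h \colon \BB \to \PP(\mathrm{At})$ given by $h(b) = \{a \in \mathrm{At} : a \le b\}$; the algebra $\widetilde\BB$ will be the image of $h$ after identifying $\mathrm{At}$ with $\kappa$.

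First I would compute $\pi(\BB)$. Since $\BB$ is atomic, $\mathrm{At}$ is itself a dense subset of $\BB$, so $\pi(\BB) \le |\mathrm{At}|$. Conversely, if $\SSS$ is any dense subset of $\BB$ and $a \in \mathrm{At}$, then some nonzero $d \in \SSS$ satisfies $d \le a$, and since $a$ is an atom $d = a$; hence $\mathrm{At} \subseteq \SSS$, giving $|\mathrm{At}| \le |\SSS|$. Thus $\kappa = \pi(\BB) = |\mathrm{At}|$. To see $\kappa$ is infinite, note that if $\mathrm{At}$ were finite then, using atomicity, every $b \in \BB$ would equal the join of the (finitely many) atoms below it — if $c$ is that join then $c \le b$, and $b \wedge c' \ne \zero$ would produce an atom $\le b \wedge c'$, which is absurd — so $|\BB| \le 2^{|\mathrm{At}|}$, contradicting that $\BB$ is infinite.

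Next I would check that $h$ is a boolean embedding. The one point needing care is the dichotomy: for an atom $a$ and any $b \in \BB$ we have $a \wedge b \in \{\zero, a\}$, so exactly one of $a \le b$ and $a \le b'$ holds. This yields $h(b') = \mathrm{At} \backslash h(b)$, and since clearly $h(b_1 \wedge b_2) = h(b_1) \cap h(b_2)$, the map $h$ is a boolean homomorphism. Injectivity follows because if $b_1 \ne b_2$ then one of $b_1 \wedge b_2'$, $b_2 \wedge b_1'$ is nonzero, hence by atomicity lies above some atom, which separates $h(b_1)$ from $h(b_2)$. Fixing a bijection between $\mathrm{At}$ and $\kappa$ and setting $\widetilde\BB = h[\BB]$ then gives $\BB \cong \widetilde\BB \subseteq \PP(\kappa)$.

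Finally, $\AAA \subseteq \widetilde\BB$: for each atom $a$ we have $h(a) = \{a\}$, so $\widetilde\BB$ contains every singleton of $\kappa$, hence every finite subset (finite joins) and every cofinite subset (complements), i.e., $\AAA \subseteq \widetilde\BB$. None of these steps is a serious obstacle; the only place to be attentive is the atom/complement dichotomy used to show that $h$ respects complementation.
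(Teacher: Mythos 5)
Your proof is correct, and it is the standard argument: the paper states Lemma \ref{lemma-atomic} without proof, treating it as a routine observation, and the canonical embedding $b \mapsto \{a \in \mathrm{At} : a \le b\}$ together with the observation that the atoms form a dense set that must be contained in every dense set is exactly the intended justification. All the details you flag (the atom/complement dichotomy, injectivity via atomicity, and $h(a)=\{a\}$ giving $\AAA \subseteq \widetilde\BB$) check out.
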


So, we need only consider $\BB$ with
$\AAA \subseteq \BB \subseteq \PP(\kappa)$. 
The proof of Example \ref{ex-max-irr}
generalizes immediately to:

\begin{lemma}
\label{lemma-init-seg}
Assume that 
$\AAA \subseteq \BB \subseteq \PP(\kappa)$,
where $\kappa$ is any infinite cardinal
and $\AAA$ is the finite-cofinite algebra
and $\kappa \subseteq \BB$ 
\textup(i.e., $\BB$ contains all initial segments of $\kappa$\textup).
Then $\pi(\BB) = \Irrmm(\BB) = \kappa$.
\end{lemma}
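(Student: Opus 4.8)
The plan is to mimic the proof of Example~\ref{ex-max-irr} almost verbatim, the only new ingredients being that one must stay inside the possibly smaller algebra $\BB$ and that $\kappa$ is now an arbitrary infinite cardinal. First I would observe that $\BB$ is infinite and atomic, its atoms being exactly the $\kappa$ singletons $\{\alpha\}$ (each is an atom of $\PP(\kappa)$, hence of $\BB$, and every nonzero $b \in \BB$ contains one); consequently, since any dense subset of $\BB$ must contain every atom, $\pi(\BB) = \kappa$ (this is also immediate from Lemma~\ref{lemma-atomic}). By the Corollary to Lemma~\ref{lemma-irred-dense} it then suffices to exhibit a maximal irredundant subset of $\BB$ of size at most $\kappa$, for then $\kappa = \pi(\BB) \le \Irrmm(\BB) \le \kappa$.

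The candidate is $\FF = \kappa \backslash \{0\}$, i.e., the set of all nonempty proper initial segments $\alpha$ of $\kappa$. Since $\kappa \subseteq \BB$ by hypothesis, $\FF \subseteq \BB$; clearly $|\FF| = \kappa$, and $\FF$ is a chain avoiding $\zero$ and $\one$, hence irredundant. For maximality, fix $c \in \BB \backslash \sa(\FF)$; I claim $\FF \cup \{c\}$ is not irredundant. Using Lemma~\ref{lemma-replace-comp} together with the fact that $c \in \sa(\FF)$ iff $c' \in \sa(\FF)$ (and $c' \in \BB$ since $\BB$ is a subalgebra), we may assume $0 \in c$. Since $\one = \kappa \in \sa(\FF)$ but $c \notin \sa(\FF)$, we have $c \ne \kappa$, so there is a least ordinal $\delta \notin c$; then $0 < \delta < \kappa$, and as the infinite cardinal $\kappa$ is a limit ordinal, also $\delta + 1 < \kappa$, so $\delta, \delta+1 \in \FF$. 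By minimality of $\delta$ we get $c \cap (\delta+1) = \delta$, so $\delta \in \sa\{c, \delta+1\}$; since $\delta \ne \delta+1$ and $\delta \ne c$ (because $\delta \in \sa(\FF)$ while $c \notin \sa(\FF)$), this exhibits $\delta$ as generated from the remaining members of $\FF \cup \{c\}$, contradicting irredundance. Hence $\FF$ is maximal, which finishes the proof.

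I do not anticipate any real obstacle; the argument is routine once Example~\ref{ex-max-irr} is in hand. The only points needing a little care are that all the relevant elements ($c'$ and every $\alpha < \kappa$) genuinely lie in $\BB$, which is guaranteed by $\BB$ being a subalgebra containing all initial segments, and the bookkeeping that makes $\delta$ and $\delta+1$ honest elements of $\FF$ — this is exactly where one uses that $\kappa$, being an infinite cardinal, is a limit ordinal, together with the normalizations $0 \in c$ (ruling out $c = \zero$) and $c \notin \sa(\FF) \ni \one$ (ruling out $c = \one$).
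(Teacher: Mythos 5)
Your proposal is correct and follows essentially the same route as the paper, which simply asserts that the proof of Example~\ref{ex-max-irr} (the chain $\FF = \kappa\backslash\{0\}$ of proper initial segments, the WLOG $0\in c$, and the identity $\delta = c\cap(\delta+1)$ for the least $\delta\notin c$) generalizes immediately. Your extra checks --- that $\FF\subseteq\BB$, that $\delta+1<\kappa$ since $\kappa$ is a limit ordinal, and that $\pi(\BB)=\kappa$ via the atoms --- are exactly the routine verifications the paper leaves implicit.
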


When $\kappa = \omega$, $\BB$ must contain all initial segments,
so the two lemmas imply:

\begin{lemma}
If $\BB$ is atomic and $\pi(\BB) = \aleph_0$ then $\Irrmm(\BB) = \aleph_0$.
\end{lemma}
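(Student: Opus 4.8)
The plan is to invoke Lemma~\ref{lemma-atomic} to reduce to the case $\AAA \subseteq \BB \subseteq \PP(\omega)$, where $\AAA$ is the finite-cofinite algebra on $\omega$, and then apply Lemma~\ref{lemma-init-seg} with $\kappa = \omega$. So the only thing to check is that $\BB$ automatically contains all initial segments of $\omega$, i.e.\ that $\omega \subseteq \BB$ in the sense of Lemma~\ref{lemma-init-seg}.

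First I would observe that every proper initial segment of $\omega$ is a finite subset of $\omega$: for $n \in \omega$, the initial segment $n = \{0, 1, \ldots, n-1\}$ is finite. Since $\AAA$ is the finite-cofinite algebra on $\omega$, it contains every finite subset of $\omega$, hence contains every proper initial segment. The improper initial segment $\omega$ itself is $\one \in \AAA \subseteq \BB$. Therefore $\omega \subseteq \BB$ (identifying each ordinal $n \le \omega$ with the corresponding initial segment), so the hypotheses of Lemma~\ref{lemma-init-seg} are satisfied with $\kappa = \omega$.

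Then by Lemma~\ref{lemma-atomic}, since $\BB$ is atomic with $\pi(\BB) = \aleph_0$, we may replace $\BB$ by an isomorphic copy $\widetilde\BB$ with $\AAA \subseteq \widetilde\BB \subseteq \PP(\omega)$. Applying the previous paragraph to $\widetilde\BB$ and then Lemma~\ref{lemma-init-seg}, we get $\Irrmm(\widetilde\BB) = \aleph_0$, and since $\Irrmm$ is an isomorphism invariant, $\Irrmm(\BB) = \aleph_0$ as well.

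I do not anticipate any real obstacle here; the content is entirely in the two lemmas already proved, and the point of the statement is just to record that, by contrast with Theorem~\ref{thm-pi-irred}, the phenomenon $\pi(\BB) < \Irrmm(\BB)$ cannot occur for atomic $\BB$ at the level of $\aleph_0$. The one thing to be careful about is the convention, implicit in Lemma~\ref{lemma-init-seg}, that ``$\kappa \subseteq \BB$'' means $\BB$ contains all initial segments; once that is unwound, the argument is immediate.
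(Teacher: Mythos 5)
Your proposal is correct and is exactly the paper's argument: the paper derives this lemma in one line from Lemma~\ref{lemma-atomic} and Lemma~\ref{lemma-init-seg}, noting that when $\kappa = \omega$ every proper initial segment is finite and hence already in $\AAA \subseteq \BB$. Your write-up just makes that observation explicit.
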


So, we shall focus here on obtaining
our $\BB$ with $\kappa = \omega_1$.
Then, note that in
Lemma \ref{lemma-init-seg}, a club of initial segments suffices:

\begin{lemma}
\label{lemma-contains-club}
Assume that 
$\AAA \subseteq \BB \subseteq \PP(\omega_1)$,
where $\AAA$ is the finite-cofinite algebra
and $C \subseteq \BB$  for some club $C \subseteq \omega_1$.
Then $\pi(\BB) = \Irrmm(\BB) = \aleph_1$.
\end{lemma}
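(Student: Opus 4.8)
The plan is to reduce Lemma~\ref{lemma-contains-club} to Lemma~\ref{lemma-init-seg} by showing that containing a club $C$ of initial segments is ``as good as'' containing all initial segments, after a harmless rescaling of the index set. First I would enumerate the club $C = \{\gamma_\xi : \xi < \omega_1\}$ in increasing order, so that the map $\xi \mapsto \gamma_\xi$ is a normal (continuous, strictly increasing) function from $\omega_1$ onto $C$. The key observation is that the partition of $\omega_1$ into the half-open intervals $I_\xi = [\gamma_\xi, \gamma_{\xi+1})$ (together with $I_{-1} = [0,\gamma_0)$, which is finite) groups the atoms of $\BB$ into countably many finite blocks indexed by $\omega_1$. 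Collapsing each block to a point gives a natural candidate for an isomorphic copy of $\BB$ sitting above the finite-cofinite algebra on a new copy of $\omega_1$ that does contain all initial segments.

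Next I would make this precise. Since $\pi(\BB) = \aleph_1$ already follows from the Corollary (or directly from Lemma~\ref{lemma-atomic}, as $\BB$ is atomic with $\aleph_1$ atoms), the real content is $\Irrmm(\BB) = \aleph_1$, i.e.\ exhibiting a maximal irredundant set of size $\aleph_1$. I would take $\FF = \{\gamma_\xi : \xi < \omega_1,\ \xi > 0\} = C \setminus \{\gamma_0\}$, viewing each $\gamma_\xi$ as the initial segment $[0,\gamma_\xi) \in \BB$. This $\FF$ is a chain, hence irredundant. For maximality, I would fix $c \in \PP(\omega_1) \setminus \sa(\FF)$ and show $\FF \cup \{c\}$ is not irredundant, mimicking the argument in Example~\ref{ex-max-irr}: by Lemma~\ref{lemma-replace-comp} we may assume $0 \in c$; the issue is that $c$ need not ``change value'' exactly at some $\gamma_\xi$, so I need a different place to catch it. The point is that $\sa(\FF)$ consists precisely of the sets that are ``constant on each block'' — more carefully, $\sa(\FF)$ is the set of finite unions of intervals $[\gamma_\xi,\gamma_\eta)$ together with their complements, i.e.\ the sets whose symmetric difference with some such finite union is finite. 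Wait — that is not right either, since $\FF$ does not generate the finite-cofinite algebra on its own. Let me instead use $\FF = \AAA \cup (C \setminus \{\gamma_0\})$ adjusted so that we stay inside the framework of Lemma~\ref{lemma-init-seg} after a genuine isomorphism.

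The cleanest route, and the one I would actually carry out, is to build the isomorphism explicitly. Let $\kappa = \omega_1$ and define $\BB^* = \{ b \subseteq \omega_1 : \text{the set } \{\xi : I_\xi \not\subseteq b \text{ and } I_\xi \cap b \neq \emptyset\} \text{ is finite, and off a finite set } b \text{ is a union of blocks } I_\xi\}$ — but rather than wrestle with such a description, I would simply define a bijection $\Phi$ between the atoms of $\BB$ and $\omega_1$ that sends the atoms in block $I_\xi$ to (the interval of ordinals corresponding to) a block, check that $\Phi$ induces an isomorphism of $\BB$ onto a subalgebra $\BB' \subseteq \PP(\omega_1)$ containing $\AAA'$, and verify that $\BB'$ contains all initial segments of $\omega_1$: an initial segment $[0,\alpha)$ of the new index set corresponds under $\Phi^{-1}$ either to an initial segment $[0,\gamma_\xi)$ of $\omega_1$ (when $\alpha$ is a ``block boundary'') or to $[0,\gamma_\xi)$ union finitely many atoms (otherwise), and in either case this lies in $\BB$ because $C \subseteq \BB$ and $\AAA \subseteq \BB$. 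Then Lemma~\ref{lemma-init-seg} applied to $\BB'$ gives $\Irrmm(\BB') = \aleph_1$, and irredundance is isomorphism-invariant, so $\Irrmm(\BB) = \aleph_1$.

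The main obstacle is the bookkeeping in setting up $\Phi$ so that it is genuinely a boolean isomorphism onto its image and so that the image provably contains every initial segment of $\omega_1$ — the subtlety being the finite ``leftover'' block $I_{-1} = [0,\gamma_0)$ and the non-boundary initial segments, which force one to use $\AAA \subseteq \BB$ (not just $C \subseteq \BB$) to land back inside $\BB$. Once the indexing is arranged so that block boundaries are cofinal and the finitely-many-exceptions clause absorbs the stray atoms, the verification that $\BB \cong \BB'$ with $\AAA' \cup \omega_1 \subseteq \BB'$ is routine, and the lemma follows from Lemma~\ref{lemma-init-seg}.
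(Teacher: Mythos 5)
Your final argument is essentially the paper's own proof: re-enumerate $\omega_1$ block by block so that every initial segment in the new indexing is a club-determined initial segment together with a finite set, hence lies in $\BB$ by $C \subseteq \BB$ and $\AAA \subseteq \BB$, and then apply Lemma~\ref{lemma-init-seg} to the isomorphic copy. One correction to your setup: the blocks $[\gamma_\xi,\gamma_{\xi+1})$ are $\aleph_1$ in number and countable but need not be finite, so to make your ``union of finitely many stray atoms'' clause true you must enumerate each block in order type $\omega$ via a bijection that is \emph{not} order-preserving --- which is exactly what the paper does, after first thinning $C$ so that every block is countably infinite.
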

\begin{proof}
Shrinking $C$ and adding in $0$ if necessary, we may assume
that $C$, in its increasing enumeration, is
$\{\delta_{\omega \cdot \alpha} : \alpha < \omega_1\}$,
where $\delta_0 = 0$ and 
$\delta_{\omega \cdot (\alpha + 1)} \ge
\delta_{\omega \cdot \alpha} + \omega$ for each $\alpha$.
Then each set-theoretic difference
$\delta_{\omega \cdot (\alpha + 1)} \backslash
\delta_{\omega \cdot \alpha}$ is countably infinite, so we can enumerate this
set as $\{\delta_{\omega \cdot \alpha + \ell} : 0 < \ell < \omega\}$.
We now have a 1-1 (but not increasing) enumeration of $\omega_1$
as $\{\delta_\xi : \xi < \omega_1\}$, and, using $\AAA \subseteq \BB$,
each initial segment
$\{\delta_\xi : \xi < \eta\} \in \BB$.
We can now apply Lemma \ref{lemma-init-seg} to the isomorphic
copy of $\BB$ obtained via the bijection $\xi \mapsto \delta_\xi$.
\end{proof}

We shall avoid this issue by constructing a $\BB$
with $\pi(\BB) < \Irrmm(\BB)$ so that $\BB$
contains no countably infinite sets at all;
we shall call such $\BB$ \emph{dichotomous}:

\begin{definition}
$\AAA$ always denotes the finite-cofinite algebra on $\omega_1$.
$\BB $ is \emph{dichotomous} iff
$\BB$ is a sub-algebra of $\PP(\omega_1)$ and
$\AAA \subseteq \BB$ and
$\forall b \in \BB\, [b \in \AAA \text{ or } |b| = |\omega_1 \backslash b| =
\aleph_1 ]$.
\end{definition}
Note that
$\forall b \in \BB\, [b \in \AAA \text{ or } |b| = |\omega_1 \backslash b| =
\aleph_1 ]$ is equivalent to 
$\forall b \in \BB\, [|b| \ne \aleph_0 ]$.

To get an easy example of a dichotomous $\BB$ of size $2^{\aleph_1}$:
Following Hausdorff \cite{Hau},
let the sets $J_\alpha \subset \omega_1$ for $\alpha < 2^{\aleph_1}$
be independent in the sense that all non-trivial boolean combinations
are uncountable (not just non-empty).  Then
$\BB = \sa(\AAA \cup \{J_\alpha : \alpha < 2^{\aleph_1}\} )$ is dichotomous.
However, it is quite possible that $\Irrmm(\BB) = \aleph_1$
because the following lemma may apply.
This goes in the opposite direction from
Lemma \ref{lemma-contains-club}:

\begin{lemma}
\label{lemma-nomax-split}
Assume that $\AAA \subseteq \BB \subset \PP(\omega_1)$,
and assume that $\omega_1 = \bigcup\{S_\xi : \xi < \omega_1\}$,
where the $S_\xi$ are disjoint countably infinite sets and
\[
\forall b \in \BB \backslash \AAA \; \exists \xi \;
[ S_\xi \cap b \ne \emptyset \;\&\; 
S_\xi \backslash b \ne \emptyset ] \ \ .
\tag{$\ast$}
\]
Then $\Irrmm(\BB) = \aleph_1$.
\end{lemma}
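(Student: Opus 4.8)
The plan is to mimic Example~\ref{ex-max-irr} and Lemma~\ref{lemma-init-seg}, but with the chain of initial segments built \emph{inside each piece} $S_\xi$ rather than along all of $\omega_1$. Concretely, fix for each $\xi < \omega_1$ a bijective enumeration $S_\xi = \{s_\xi(n) : n < \omega\}$, put $I^\xi_n = \{s_\xi(0), \dots, s_\xi(n-1)\}$ for $1 \le n < \omega$, and let $\FF_\xi = \{I^\xi_n : 1 \le n < \omega\}$ and $\FF = \bigcup_{\xi < \omega_1}\FF_\xi$. Each $I^\xi_n$ is finite, so $\FF \subseteq \AAA \subseteq \BB$, and $|\FF| = \aleph_1$. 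I would prove that $\FF$ is a \emph{maximal} irredundant subset of $\BB$; this suffices, since $\BB$ contains all $\aleph_1$ singletons of $\omega_1$, each of which is an atom of $\BB$, so every dense subset of $\BB$ contains all of them, whence $\aleph_1 \le \pi(\BB) \le \Irrmm(\BB) \le |\FF| = \aleph_1$ by the Corollary.

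That $\FF$ is irredundant follows by restricting to a single piece. If $a = I^\xi_n$ belonged to $\sa(\FF\setminus\{a\})$, then, since intersecting with the fixed set $S_\xi$ commutes with the finite boolean operations, $a = a\cap S_\xi$ would lie in the subalgebra of $\PP(S_\xi)$ generated by $\{I^\xi_m : m \ge 1,\ m \ne n\} \cup \{\zero, S_\xi\}$ (the pieces $S_\eta$ with $\eta \ne \xi$ contribute $\zero$). But no generator in that list separates the two points $s_\xi(n-1)$ and $s_\xi(n)$ --- each such $I^\xi_m$ has $m \le n-1$ and contains neither, or $m \ge n+1$ and contains both --- whereas $a$ contains $s_\xi(n-1)$ but not $s_\xi(n)$: contradiction. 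The same bookkeeping shows $\AAA \subseteq \sa(\FF)$, since a finite set meets only finitely many pieces and within each piece is a finite union of singletons $\{s_\xi(m)\} = I^\xi_{m+1}\setminus I^\xi_m$, and cofinite sets are their complements.

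Maximality is the step that actually uses $(\ast)$, and it is the crux. Given $c \in \BB\setminus\FF$, I must exhibit a failure of irredundance for $\FF\cup\{c\}$. If $c \in \AAA$, then $c \in \sa(\FF) = \sa\big((\FF\cup\{c\})\setminus\{c\}\big)$ by the previous paragraph, so $c$ is the witness. If $c \in \BB\setminus\AAA$, apply $(\ast)$ to get $\xi$ with $S_\xi\cap c \ne\emptyset$ and $S_\xi\setminus c\ne\emptyset$. Since $c'\notin\FF$ (otherwise $c$ is cofinite, hence in $\AAA$) and $c'$ also splits $S_\xi$, Lemma~\ref{lemma-replace-comp} lets me normalize, replacing $c$ by $c'$ if necessary, so that $s_\xi(0)\in c$. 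Let $k\ge 1$ be least with $s_\xi(k)\notin c$; then $I^\xi_{k+1}\cap c = I^\xi_k$, so $I^\xi_k = I^\xi_{k+1}\wedge c \in \sa(\{I^\xi_{k+1},c\}) \subseteq \sa\big((\FF\cup\{c\})\setminus\{I^\xi_k\}\big)$ --- using that $I^\xi_k, I^\xi_{k+1}\in\FF_\xi$ are distinct and $c\ne I^\xi_k$ (as $c\notin\AAA$) --- so $\FF\cup\{c\}$ is not irredundant. Hence $\FF$ is maximal, and the lemma follows.

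I do not expect a genuine obstacle: the only real decision is to take chains of initial segments \emph{within the pieces} $S_\xi$, and once that is made both irredundance and maximality reduce to the one-variable identity $I^\xi_k = I^\xi_{k+1}\cap c$ together with elementary index bookkeeping (the degenerate cases $n = 1$ and $k \ge 1$ need a glance but are routine).
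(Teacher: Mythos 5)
Your proposal is correct and is essentially the paper's own proof: the family $\FF$ of initial segments of enumerations of the pieces $S_\xi$ is exactly the witness the paper uses, and the maximality argument (normalize via Lemma~\ref{lemma-replace-comp} so that $s_\xi(0)\in c$, then take the least $k$ with $s_\xi(k)\notin c$ and observe $I^\xi_k = I^\xi_{k+1}\cap c$) is identical. The only difference is that you spell out the irredundance of $\FF$ and the identity $\sa(\FF)=\AAA$, which the paper leaves to the reader.
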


\begin{proof}
List each $S_\xi$ as $\{\sigma_\xi^\ell : \ell \in \omega \}$.
Then, let $\EE$ be the set of
all $\{\sigma_\xi^0, \ldots,  \sigma_\xi^\ell\}$
such that $\xi < \omega_1$ and $\ell < \omega$.
Then $\sa(\EE) = \AAA$, so $\EE$ is maximally irredundant in $\AAA$.

Also, $\EE$ remains maximal in $\BB$.  Proof:  fix 
$b \in \BB \backslash \AAA$ and then fix $\xi$ as in $(\ast)$.
WLOG $\sigma_\xi^0 \in b$ (otherwise, swap $b/b'$).
Then let $\ell$ be least such that
$\sigma_\xi^\ell \notin b$; so,
$\{\sigma_\xi^0, \ldots,   \sigma_\xi^{\ell-1}\} \subseteq b$.
Then $\{\sigma_\xi^0, \ldots,  \sigma_\xi^{\ell -1}\} = 
\{\sigma_\xi^0, \ldots,  \sigma_\xi^\ell\} \cap b$,
so $\EE \cup \{b\}$ is not irredundant.
\end{proof}

To see how this lemma might apply to 
$\BB = \sa(\AAA \cup \{J_\alpha : \alpha < 2^{\aleph_1}\} )$:
Start with any partition $\{S_\xi : \xi < \omega_1\}$.
Choose any $T_\xi$ with $\emptyset \subsetneqq T_\xi \subsetneqq S_\xi$.
Then, choose the independent $J_\alpha$ so that each
$J_\alpha \cap S_\xi$ is either $T_\xi$ or $\emptyset$.

Assuming that $2^{\aleph_1} = \aleph_2$,
our $\BB$ satisfying Theorem \ref{thm-pi-irred} will in fact
be dichotomous and of
the form $\sa(\AAA \cup \{J_\alpha : \alpha < \omega_2\} )$,
where the $J_\alpha$ are independent,
but the $J_\alpha$ will be chosen inductively,
in $\omega_2$ steps
to avoid situations such as the one described in Lemma \ref{lemma-nomax-split}.
The next section defines some cardinals below $2^{\aleph_1}$
that will be useful both in describing properties of clubs
and in deriving a version of Theorem \ref{thm-pi-irred}
that applies in some models of $2^{\aleph_1} > \aleph_2$.

\section{Some Small Cardinals}
\label{sec-small}

We begin with some remarks on club subsets of $\omega_1$.

\begin{definition}
\label{def-club-part}
Given a club $C \subseteq \omega_1$, we 
define the \emph{associated partition} of $\omega_1$ into $\aleph_1$
non-empty countable sets, which we shall call the 
\emph{$C$--blocks}, and label them
as $S_\xi^C$ \textup(or, just $S_\xi$\textup) for $\xi < \omega_1$.
If $0 \in C$, write $C = \{\gamma_\xi : \xi < \omega_1\}$ in increasing
enumeration;
then $S_\xi = [\gamma_\xi, \gamma_{\xi + 1})$.  If $0 \notin C$,
let $S_\xi^C = S_\xi^{C \cup \{0\}}$.
\end{definition}

Note that if we are given sets $S_\xi$ satisfying the hypotheses
of Lemma \ref{lemma-nomax-split}, then there is a club
$C$ such that each $S_\xi$ meets only one $C$--block.
Then, $\{S_\xi^C : \xi < \omega_1\}$ also will satisfy the hypotheses
of Lemma \ref{lemma-nomax-split}.

For our purposes, ``thinner'' clubs will yield ``better'' partitions.
As usual, for subsets of $\omega_1$,
$D \subseteq^* C$ means that $D \backslash C$ is countable.
Then observe

\begin{lemma}
\label{lemma-block-finer}
If $D \subseteq^* C \subseteq \omega_1$ and $D,C$ are clubs,
then all but countably many $D$--blocks are unions of $C$--blocks.
\end{lemma}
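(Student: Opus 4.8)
The plan is to reduce to the case $0 \in C$ and $0 \in D$, which costs nothing: by Definition \ref{def-club-part} the $C$--blocks and $D$--blocks are computed from $C \cup \{0\}$ and $D \cup \{0\}$ anyway, and $D \cup \{0\} \subseteq^* C \cup \{0\}$ still holds. So I would assume $0 \in C \cap D$ and write both clubs in increasing enumeration, $C = \{\gamma_\xi : \xi < \omega_1\}$ and $D = \{\delta_\xi : \xi < \omega_1\}$, so that $S_\xi^C = [\gamma_\xi, \gamma_{\xi+1})$ and $S_\xi^D = [\delta_\xi, \delta_{\xi+1})$ are half-open intervals.

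The crux is the observation that a $D$--block $S_\xi^D$ is a union of $C$--blocks as soon as \emph{both} of its endpoints $\delta_\xi$ and $\delta_{\xi+1}$ lie in $C$. To see this, write $\delta_\xi = \gamma_\eta$ and $\delta_{\xi+1} = \gamma_{\eta'}$ with $\eta < \eta'$; then the $C$--blocks $[\gamma_\zeta, \gamma_{\zeta+1})$ for $\eta \le \zeta < \eta'$ partition $[\gamma_\eta, \gamma_{\eta'}) = S_\xi^D$, where closure of $C$ is exactly what is needed to guarantee $\gamma_\zeta = \sup_{\mu < \zeta} \gamma_\mu$ at limit $\zeta$ so that no points are skipped. (This is also where one sees the hypothesis that $C$ is a \emph{club}, not merely unbounded, is essential.)

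Finally I would count. Put $X = \{\xi < \omega_1 : \delta_\xi \notin C\}$. Since $\xi \mapsto \delta_\xi$ is a bijection of $\omega_1$ onto $D$ and $D \setminus C$ is countable by hypothesis, $X$ is countable. By the crux observation, $S_\xi^D$ is a union of $C$--blocks whenever $\xi \notin X$ and $\xi + 1 \notin X$; hence the set of indices $\xi$ for which $S_\xi^D$ could fail to be such a union is contained in $X \cup \{\xi : \xi + 1 \in X\}$, which has size at most $2|X| \le \aleph_0$. This gives the conclusion. I do not anticipate a genuine obstacle here; the only points requiring a little care are the bookkeeping in the reduction to $0 \in C \cap D$ and the harmless $+1$ shift in the final count, together with the remark that the argument really does use closedness of $C$.
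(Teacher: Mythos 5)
Your proof is correct. The paper states this lemma without proof (it is introduced with ``Then observe''), and your argument --- reducing to $0 \in C \cap D$, noting that a $D$--block $[\delta_\xi,\delta_{\xi+1})$ is a union of $C$--blocks once both endpoints lie in $C$ (using closedness of $C$ so that the $C$--blocks partition the interval with no gaps), and then observing that the exceptional index set is contained in $X \cup \{\xi : \xi+1 \in X\}$ with $X = \{\xi : \delta_\xi \notin C\}$ countable --- is exactly the routine verification the author intends the reader to supply.
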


Given $\aleph_1$ clubs $C_\alpha$, for $\alpha < \omega_1$,
there is always a club $D$ such that $D \subseteq^* C_\alpha$ 
for all $\alpha$.
Whether this holds for more than $\aleph_1$ clubs
depends on the model of set theory one is in.
The basic properties here are controlled by the cardinals
$\bbbb_{\omega_1}$ and $\dddd_{\omega_1}$.

Cardinal characteristics of the continuum (e.g., $\bbbb$,  $\dddd$, etc.)
are well-known, and are discussed in set theory texts
(e.g., \cite{Je, Kunen3}), and in much more detail in the paper
of Blass \cite{Blass2}.
In analogy with $\bbbb$ and $\dddd$, we use $\bbbb_{\omega_1}$ to denote the
least size of an unbounded family in ${\omega_1}^{\omega_1}$,
while $\dddd_{\omega_1}$ denotes the least size of a dominating family.
Then $\bbbb_{\omega_1}$ is regular and
$\aleph_2 \le \bbbb_{\omega_1} \le \dddd_{\omega_1} \le  2^{\aleph_1}$.
Furthermore, statements such as
$\bbbb_{\omega_1} = \aleph_2$ and
$\bbbb_{\omega_1} = 2^{\aleph_1}$ and
$\aleph_2 < \bbbb_{\omega_1} < 2^{\aleph_1}$ are consistent with
$\CH$ plus $2^{\aleph_1}$ being arbitrarily large;
see \cite{Kunen3} \S V.5 for an exposition of these matters.
For our purposes here, it will often be useful to
rephrase $\bbbb_{\omega_1}$ and $\dddd_{\omega_1}$
in terms of clubs:

\begin{lemma}
Let $\CCCC$ be the set of all club subsets of $\omega_1$.
Then
$\dddd_{\omega_1}$ is the least $\kappa$ such that \textup(a\textup) holds and
$\bbbb_{\omega_1}$ is the least $\kappa$ such that \textup(b\textup) holds:
\[
\begin{array}{ll}
a. & \exists \DDDD \subseteq \CCCC  \;
  [ |\DDDD| = \kappa \;\&\; \forall C \in \CCCC \, \exists D \in \DDDD \,
  [ D \subseteq^* C ]]   \\
b. & \exists \BBBB \subseteq \CCCC  \;
  [ |\BBBB| = \kappa \;\&\; \neg \exists C \in \CCCC \, \forall D \in \BBBB \,
  [ C \subseteq^* D ]]    
\end{array} \ \ .
\]
\end{lemma}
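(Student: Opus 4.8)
The plan is to translate the definitions of $\bbbb_{\omega_1}$ and $\dddd_{\omega_1}$ (least unbounded, resp.\ dominating, family in $\pre{\omega_1}{\omega_1}$ under eventual domination) into the language of clubs, using the obvious correspondence between a club $C\subseteq\omega_1$ and its increasing enumeration $e_C\in\pre{\omega_1}{\omega_1}$. The key observations are: (i) every strictly increasing continuous $f\in\pre{\omega_1}{\omega_1}$ with $f(\alpha)\ge\alpha$ arises as $e_C$ for a club $C$ (its range), and every $f\in\pre{\omega_1}{\omega_1}$ is eventually dominated by such an $e_C$ — indeed, given $f$, recursively close off to build a club $C$ whose $\xi$-th element exceeds $f(\xi)$; (ii) for clubs $C,D$ one has $D\subseteq^* C$ essentially iff $e_D$ dominates $e_C$ in the eventual sense, because a tail of $D$ sitting inside $C$ forces the enumerating function of $D$ to catch up with that of $C$ from some point on, and conversely. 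So the plan is to prove the two halves (a) for $\dddd_{\omega_1}$ and (b) for $\bbbb_{\omega_1}$ separately, each by a pair of inequalities ($\le$ and $\ge$) obtained by pushing a witnessing family of functions through this correspondence and back.

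First I would handle (a). For ``$\dddd_{\omega_1}\ge$ the least $\kappa$ making (a) hold'': take a dominating family $\FF\subseteq\pre{\omega_1}{\omega_1}$ of size $\dddd_{\omega_1}$; replace each $f\in\FF$ by a club $D_f$ whose enumeration dominates $f$ (as in (i)); then $\DDDD=\{D_f:f\in\FF\}$ works, since given any club $C$, the function $e_C$ is dominated by some $f\in\FF$, hence by $e_{D_f}$, and eventual domination of $e_C$ by $e_{D_f}$ yields $D_f\subseteq^* C$. Conversely, for ``$\dddd_{\omega_1}\le$'': given $\DDDD$ witnessing (a), the family $\{e_D:D\in\DDDD\}$ is dominating, because an arbitrary $g\in\pre{\omega_1}{\omega_1}$ is below $e_C$ for a suitable club $C$ (build $C$ to outrun $g$), and then $D\subseteq^* C$ for some $D\in\DDDD$ gives $e_D$ eventually above $e_C\ge g$. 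Statement (b) is the ``dual'': an unbounded family corresponds to a family $\BBBB$ of clubs such that no single $C$ satisfies $C\subseteq^* D$ for all $D\in\BBBB$ — i.e., no $e_C$ eventually dominates all $e_D$, which is exactly unboundedness of $\{e_D:D\in\BBBB\}$. The same correspondence, run in both directions, converts a witness for one side into a witness for the other with the same cardinality, giving both inequalities.

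The routine-but-careful point, and the one I expect to be the main technical obstacle, is pinning down precisely the equivalence in (ii): $D\subseteq^* C$ is not literally the same as ``$e_D$ eventually dominates $e_C$'', and one must be a little careful at limit stages and with the countable exceptional sets. The clean statement is: if $D\subseteq^* C$ then $e_D(\xi)\ge e_C(\xi)$ for all large $\xi$ (since past the countable part where $D\not\subseteq C$, every element of $D$ lies in $C$, so the $\xi$-th element of $D$ is at least the $\xi$-th element of $C$); and conversely if $e_D$ eventually dominates $e_C$ then a tail of $D$ is contained in $C$ — this direction uses that $C$ is closed, so that $\gamma\in D$ with $e_C(\xi)\le\gamma<e_C(\xi+1)$ for the appropriate $\xi$ actually forces $\gamma=e_C(\xi)\in C$ once the enumerations have lined up; here one passes to the finer-block picture of Lemma~\ref{lemma-block-finer}, or argues directly. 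Lemma~\ref{lemma-block-finer} is exactly the bookkeeping tool that makes this comparison transparent, since it says the $D$--blocks refine into $C$--blocks off a countable set, which is the combinatorial content of $D\subseteq^* C$. Once this dictionary is set up, both (a) and (b) follow by the symmetric argument sketched above, with no further set-theoretic input.
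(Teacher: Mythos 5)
There is a genuine gap: the dictionary you set up between clubs and functions, via the increasing enumeration $e_C$, is false in both of the directions you need, and the paper's lemma cannot be proved with it. First, it is \emph{not} true that every $f \in \pre{\omega_1}{\omega_1}$ is eventually dominated by the enumeration of some club. Since $e_C$ is strictly increasing and continuous, its fixed points form a club, so $e_C(\xi)=\xi$ for uncountably many $\xi$; hence no club enumeration dominates even $f(\xi)=\xi+1$ modulo a countable set. Your ``recursively close off'' construction breaks exactly at limit stages, where closedness forces $e_C(\lambda)=\sup_{\xi<\lambda}e_C(\xi)$, which you cannot push above $f(\lambda)$. Second, the converse you flag as the main technical point --- that $e_D$ eventually dominating $e_C$ implies $D\subseteq^* C$ --- is also false: take $C$ to be the club of limit ordinals and $D=\{\lambda+1:\lambda \text{ limit}\}\cup\{\lambda : \lambda \text{ a limit of limits}\}$; then $e_D$ dominates $e_C$ everywhere while $D\setminus C$ is uncountable. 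Your justification (``$e_C(\xi)\le\gamma<e_C(\xi+1)$ forces $\gamma=e_C(\xi)$'') is wrong because $\gamma$ can sit strictly inside a gap of $C$. Both inequalities of part (a), and likewise part (b), rest on one or the other of these false claims, so the argument as written does not go through. The only correct piece of the dictionary is the implication $D\subseteq^* C \Rightarrow e_D(\xi)\ge e_C(\xi)$ for all but countably many $\xi$.

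The standard repair, which is presumably what the paper has in mind (it states the lemma without proof), uses two \emph{different} maps rather than the single map $C\mapsto e_C$: associate to a club $C$ its ``next element'' function $g_C(\xi)=\min(C\setminus(\xi+1))$, and to a function $f$ the club $C_f=\{\delta : \forall\xi<\delta\,[f(\xi)<\delta]\}$ of its closure points. The two true key facts are: (i) if $f$ eventually dominates $g_C$, then every sufficiently large element of $C_f$ is a limit point of $C$ and hence lies in $C$, so $C_f\subseteq^* C$; and (ii) if $D\subseteq^* C$ then $g_D$ eventually dominates $g_C$, while $g_{C_h}$ dominates $h$ for every $h$. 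Running your four inequalities with $g_C$ and $C_f$ in place of $e_C$ and your ad hoc clubs gives the lemma; Lemma~\ref{lemma-block-finer} is not needed and does not rescue the enumeration-based version.
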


The following definition relates clubs to the proof of
Lemma \ref{lemma-nomax-split}.
As before, $\AAA$ always denotes the
finite-cofinite algebra on $\omega_1$.

\begin{definition}
\label{def-induced-by}
A club $C \subset \omega_1$ is \emph{nice} iff all
$S_\xi = S^C_\xi$ are infinite.
If $C$ is nice, then
$\EE \subset \AAA$ is \emph{induced by} $C$ iff $\EE$
is obtained from the $S_\xi$ as in the proof of Lemma \ref{lemma-nomax-split}.
That is, list each $S_\xi$ as $\{\sigma_\xi^\ell : \ell \in \omega \}$;
then, $\EE = \{ \{\sigma_\xi^0, \ldots,  \sigma_\xi^\ell\} :
\xi < \omega_1 \,\&\, \ell < \omega \}$.
\end{definition}

Of course, $\EE$ is not uniquely defined from $C$, since 
$\EE$ depends on a choice of an enumeration of
each $S_\xi$.
Note that $\EE$ must be maximally irredundant in $\AAA$.  Whether
$\EE$ remains maximal in some $\BB \supseteq \AAA$ will depend on $\BB$.

For dichotomous $\BB$, the hypothesis $(\ast)$ of
Lemma \ref{lemma-nomax-split},
when $C$ is nice and $S_\xi = S^C_\xi$,
is equivalent to saying that
no $b \in \BB$ is \emph{blockish} for $C$:

\begin{definition}
\label{def-blockish}
If $C \subseteq \omega_1$ is a club, then
$b \subseteq \omega_1$  is \emph{blockish} for $C$
iff both $b$ and $b'$ are unions of $\aleph_1$ $C$--blocks.
\end{definition}

We next consider the
$\omega_1$ version of the \emph{reaping number} $\rrrr$.
This $\rrrr$ is less well-known than $\bbbb$ and $\dddd$, but
it is discussed in Blass \cite{Blass2}.

\begin{definition}
\label{def-reap}
If $\RR \subseteq [\omega_1]^{\aleph_1}$, then $T \subseteq \omega_1$
\emph{splits} $\RR$ iff 
$|X \cap T| = |  X \setminus T| = \aleph_1$ for all $X \in \RR$.
Then, $ \rrrr_{\omega_1}$ is the least cardinality of
an $\RR \subseteq [\omega_1]^{\aleph_1}$ such that
no $T \subseteq \omega_1$ splits $\RR$.
\end{definition}

Related to this, one might be tempted to
define a \emph{strong reaping number}:

\begin{definition}
\label{def-strong-reap}
If $\RR \subseteq [\omega_1]^{\aleph_1}$, then 
the nice \textup(Definition \ref{def-induced-by}\textup) club $C$
\emph{strongly splits} $\RR$ iff
every set $T$ that is blockish for $C$ splits $\RR$.
Then, $ \hat\rrrr_{\omega_1}$ is the least cardinality of
an $\RR \subseteq [\omega_1]^{\aleph_1}$ such that
no nice club strongly splits $\RR$.
\end{definition}

Some simple remarks:  The nice club $C$ strongly splits $\RR$
iff for each $X \in \RR$, all but countably many $C$--blocks meet $X$.
Also, if $C \subseteq^* D$ and 
$D$ strongly splits $\RR$, then $C$ strongly splits $\RR$.
Actually, $\hat\rrrr_{\omega_1} = \bbbb_{\omega_1}$
(although the concept of $\hat\rrrr_{\omega_1}$ will be useful); the
cardinals that we have defined are related by the following inequalities:

\begin{lemma}
\label{lemma-ineq}
$\aleph_2 \le \bbbb_{\omega_1} = \hat\rrrr_{\omega_1} \le \rrrr_{\omega_1}
\le 2^{\aleph_1}$
and
$\aleph_2 \le \bbbb_{\omega_1} = \hat\rrrr_{\omega_1} \le \dddd_{\omega_1}
\le 2^{\aleph_1}$.
\end{lemma}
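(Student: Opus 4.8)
The bounds $\aleph_2 \le \bbbb_{\omega_1} \le \dddd_{\omega_1} \le 2^{\aleph_1}$ are already in hand, so what remains is $\rrrr_{\omega_1} \le 2^{\aleph_1}$, $\hat\rrrr_{\omega_1} \le \rrrr_{\omega_1}$, and $\hat\rrrr_{\omega_1} = \bbbb_{\omega_1}$. The first is trivial, taking $\RR = [\omega_1]^{\aleph_1}$: no $T$ splits it, since either $|T| = \aleph_1$, in which case $T$ itself is a member of $\RR$ not split by $T$, or $T$ is countable, in which case $\omega_1 \in \RR$ is not split by $T$. For the second, every nice club $C$ carries a blockish set (the union of every other $C$--block), so a family witnessing $\rrrr_{\omega_1}$ is a fortiori not strongly split by any nice club, giving $\hat\rrrr_{\omega_1} \le \rrrr_{\omega_1}$.

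The heart of the matter is $\hat\rrrr_{\omega_1} = \bbbb_{\omega_1}$, which I would derive from a dictionary between nice clubs that strongly split a family and dominating functions (here $f \le^* g$ means $\{\alpha : f(\alpha) > g(\alpha)\}$ is countable). To $X \in [\omega_1]^{\aleph_1}$ attach $g_X \colon \omega_1 \to \omega_1$, $g_X(\alpha) = \min(X \setminus \alpha) + 1$, well defined as $X$ is cofinal. First claim: if a nice club $C = \{\gamma_\xi : \xi < \omega_1\}$ (increasing) has all but countably many $C$--blocks meeting $X$, then $g_X \le^* h_C$, where $h_C(\delta) = \gamma_{\xi + 2}$ for $\gamma_\xi \le \delta < \gamma_{\xi+1}$; indeed, whenever the block after $\delta$'s block meets $X$, the least element of $X$ above $\delta$ is below $\gamma_{\xi+2}$, and only countably many $\delta$ are exceptional. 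The crucial point is that $h_C$ depends only on $C$. Conversely, if $g^*$ is a normal function with $g^*(\alpha) \ge \alpha + \omega$ and $g_X \le^* g^*$, then the club $C$ of closure points of $g^*$ is nice (consecutive closure points $\gamma < \gamma'$ satisfy $\gamma' > g^*(\gamma) \ge \gamma + \omega$, so all blocks are infinite) and all but countably many $C$--blocks meet $X$ (past the threshold, $\min(X \setminus \gamma) < g^*(\gamma) < \gamma'$ puts a point of $X$ into $[\gamma, \gamma')$). Since "$C$ strongly splits $\RR$" is exactly "all but countably many $C$--blocks meet each $X \in \RR$", this yields: $\RR$ is strongly split by some nice club if and only if $\{g_X : X \in \RR\}$ is $\le^*$--bounded.

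From this equivalence $\hat\rrrr_{\omega_1} \ge \bbbb_{\omega_1}$ is immediate: if $|\RR| < \bbbb_{\omega_1}$, then $\{g_X : X \in \RR\}$ is a family of fewer than $\bbbb_{\omega_1}$ functions, so together with $\alpha \mapsto \alpha+\omega$ it is dominated by some $g^*$; replacing $g^*$ by the normal hull of $\max(g^*, \alpha \mapsto \alpha+\omega)$ we may assume $g^*$ is normal and $\ge \alpha+\omega$, and then the club of its closure points strongly splits $\RR$. For $\hat\rrrr_{\omega_1} \le \bbbb_{\omega_1}$, fix an unbounded family $\{f_\beta : \beta < \bbbb_{\omega_1}\} \subseteq {\omega_1}^{\omega_1}$ with each $f_\beta$ strictly increasing and $f_\beta(\alpha) > \alpha$, let $X_\beta$ be the club of closure points of $f_\beta$, and put $\RR = \{X_\beta : \beta < \bbbb_{\omega_1}\} \subseteq [\omega_1]^{\aleph_1}$. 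By the equivalence it suffices that $\{g_{X_\beta}\}$ be $\le^*$--unbounded, so suppose a normal $h$ dominates them all. Fix $\beta$. For $\gamma \in X_\beta$ past the threshold, $\gamma + 1 \notin X_\beta$ (because $f_\beta(\gamma) > \gamma$), so $g_{X_\beta}(\gamma+1)$ equals the $X_\beta$--successor of $\gamma$ plus $1$; hence that successor is $\le h(\gamma+1)$, and since every closure point of $f_\beta$ above $\gamma$ exceeds $f_\beta(\gamma)$, we get $f_\beta(\gamma) < h(\gamma+1)$. This propagates to all ordinals: given $\delta$ past the threshold, let $\gamma$ be the largest element of $X_\beta$ with $\gamma \le \delta$ and $\gamma'$ its $X_\beta$--successor; then $f_\beta(\delta) < \gamma'$ (as $\gamma'$ is a closure point exceeding $\delta$) and $\gamma' \le h(\gamma+1) \le h(\delta+1)$, so $f_\beta(\delta) < h(\delta+1)$. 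Thus every $f_\beta$ is $\le^*$--dominated by $\delta \mapsto h(\delta+1)$, contradicting unboundedness.

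I expect the last direction to be the genuine obstacle. The naive strategy is to compare the candidate nice club $C$ with the sets $X_\beta$ directly through $\subseteq^*$, but this does not work: a club can \emph{thread} a target set, meeting every one of its blocks while being disjoint from it, so $\subseteq^*$ cannot be recovered from strong splitting. The remedy is to route everything through the functions $g_X$ — which is precisely what the equivalence of the second paragraph does — and to choose each $X_\beta$ to consist of closure points, so that control of $f_\beta$ on $X_\beta$ automatically propagates to every ordinal; granting that, the remaining work is just the "all but countably many" bookkeeping.
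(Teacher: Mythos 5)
Your argument is correct in substance, and the two easy inequalities you prove explicitly ($\rrrr_{\omega_1}\le 2^{\aleph_1}$ via $\RR=[\omega_1]^{\aleph_1}$, and $\hat\rrrr_{\omega_1}\le\rrrr_{\omega_1}$ via the existence of a blockish set for every nice club) are exactly right; the paper simply takes these for granted and only proves $\bbbb_{\omega_1}=\hat\rrrr_{\omega_1}$. For that equality the paper works entirely at the level of clubs and $\subseteq^*$, quoting its club reformulation of $\bbbb_{\omega_1}$: for $\bbbb_{\omega_1}\le\hat\rrrr_{\omega_1}$ it picks, for each $X\in\RR$, a nice club $C_X$ all of whose blocks meet $X$ and then takes a club $C\subseteq^* C_X$ for all $X$ (Lemma \ref{lemma-block-finer} finishes); for $\hat\rrrr_{\omega_1}\le\bbbb_{\omega_1}$ it observes that if $D$ strongly splits a family of clubs $C_\alpha$, then the set of limit points of $D$ is $\subseteq^*$ each $C_\alpha$, so the family is bounded. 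Your proof is the same combinatorics pushed down to the level of functions: the dictionary $X\mapsto g_X$, $C\mapsto h_C$ is precisely the translation that the paper's club-characterization lemma packages away, and your closure-point clubs $X_\beta$ play the role of the paper's $C_\alpha$. What your route buys is self-containedness (you re-derive the club/function translation rather than citing it) at the cost of length; the computation $f_\beta(\delta)<h(\delta+1)$ replaces the one-line ``limit points of $D$ lie in $C_\alpha$'' argument.

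One thing does need repair: your repeated insistence on \emph{normal} auxiliary functions. A normal (i.e., continuous and strictly increasing) function on $\omega_1$ has a club of fixed points, so there is no normal $g^*$ with $g^*(\alpha)\ge\alpha+\omega$ everywhere, and no normal $h$ can $\le^*$--dominate even one $g_X$ (since $g_X(\alpha)>\alpha$ for all $\alpha$ while $h(\alpha)=\alpha$ on a club). As written, your ``normal hull'' reduction is impossible and your supposition ``a normal $h$ dominates them all'' is vacuous, which would make the $\hat\rrrr_{\omega_1}\le\bbbb_{\omega_1}$ direction prove nothing. Fortunately nothing in your argument uses continuity: the set of closure points of an arbitrary $g^*$ is already a club, consecutive closure points $\gamma<\gamma'$ automatically satisfy $g^*(\gamma)<\gamma'$, and the step $h(\gamma+1)\le h(\delta+1)$ needs only that $h$ be weakly increasing. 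Replace ``normal'' by ``increasing'' (and the normal hull by the increasing majorant $\alpha\mapsto\sup\{g^*(\beta):\beta\le\alpha\}+\alpha+\omega$) throughout and the proof is sound.
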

\begin{proof}
For $\bbbb_{\omega_1} \le \hat\rrrr_{\omega_1}$:
For each $X \in \RR$, choose a nice club $C_X$ such that
all $C_X$--blocks meet $X$.
If $|\RR| < \bbbb_{\omega_1}$, then
there is a nice club $C$ such that $C \subseteq^* C_X$ for each $X \in \RR$.

For $\hat\rrrr_{\omega_1} \le \bbbb_{\omega_1}$:
Fix $\kappa < \hat\rrrr_{\omega_1} $; we shall show that 
$\kappa < \bbbb_{\omega_1}$.  So, let $C_\alpha$, for $\alpha < \kappa$
be clubs.  Then, fix a nice club $D$ that strongly splits
$\{C_\alpha: \alpha < \kappa\}$;
so, for each $\alpha$, all but countably many $D$--blocks meet $C_\alpha$.
But then $\tilde D \subseteq^* C_\alpha$ for each $\alpha$, where
$\tilde D$ is the set of limit points of $D$.
\end{proof}

It is not clear which of the many independence results involving
these cardinals on $\omega$ go through for the $\omega_1$ versions.
Of course, all these cardinals are $\aleph_2$ if
$2^{\aleph_1} = \aleph_2$.  Also, the following is easy by standard
forcing arguments:

\begin{lemma}
\label{lemma-forcing}
In $V$, assume that $\GCH$ holds and $\kappa > \aleph_2$ is regular.
Then there are cardinal-preserving
forcing extensions $V[G]$ satisfying each of the following:

1.  $\aleph_2 = \bbbb_{\omega_1} < 
 \dddd_{\omega_1} = \rrrr_{\omega_1} = 2^{\aleph_1} = \kappa$  .

2.  $\aleph_2 < \bbbb_{\omega_1} = 
 \dddd_{\omega_1} = \rrrr_{\omega_1} = 2^{\aleph_1} = \kappa$ .

3.  $\aleph_2 = \bbbb_{\omega_1} = 
 \dddd_{\omega_1} < \rrrr_{\omega_1} = 2^{\aleph_1} = \kappa$ .
\end{lemma}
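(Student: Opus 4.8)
The plan is to realize the three items, in turn, as $\omega_1$-analogues of the Cohen, Hechler, and ``bounding-with-large-$\rrrr_{\omega_1}$'' models, forcing in each case with a $<\omega_1$-closed poset. Since a $<\omega_1$-closed poset adds no new $\omega$-sequence, it adds no reals and preserves $\omega_1$ and $\CH$; hence $\aleph_2 = (2^{\aleph_0})^+$ is preserved \emph{as long as} the poset is $\aleph_2$-cc, which then preserves every larger cardinal too and, together with $\GCH$ in $V$ and a count of nice names (the poset has size $\le\kappa$, so $\le\kappa^{\aleph_1}=\kappa$ names for subsets of $\omega_1$, while $\kappa$ new ones are added), forces $2^{\aleph_1}=\kappa$ in $V[G]$. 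I use repeatedly the $\ZFC$ fact that any family of $\le\aleph_1$ functions in $\pre{\omega_1}{\omega_1}$ is bounded in eventual dominance --- this is exactly why $\aleph_2\le\bbbb_{\omega_1}$ (Lemma \ref{lemma-ineq}) --- so in each item it suffices to compute $\bbbb_{\omega_1}$ and $\dddd_{\omega_1}$, since Lemma \ref{lemma-ineq} then traps $\rrrr_{\omega_1}$ between $\bbbb_{\omega_1}$ and $2^{\aleph_1}$, except in item~3 where $\rrrr_{\omega_1}$ must be treated by hand.

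For item~1, force with $\Fn(\kappa\times\omega_1,2,\omega_1)$, the $<\omega_1$-support product of $\kappa$ copies of $2^{<\omega_1}$; under $\CH$ this is $\aleph_2$-cc by the $\Delta$-system lemma for countable sets. A one-line density argument --- an uncountable subset of $\omega_1$ is cofinal in $\omega_1$ --- shows that each generic $c\subseteq\omega_1$ is forced to meet, and to miss, every ground-model member of $[\omega_1]^{\aleph_1}$ cofinally, so $c$ splits all ground-model $\aleph_1$-sets, and the gap function $\alpha\mapsto\min(c\setminus(\alpha+1))$ is undominated over the model in which $c$ appears. Since any $\RR\subseteq[\omega_1]^{\aleph_1}$, and any $<\kappa$-sized family of functions, occurs in some intermediate $V[G\res A]$ with $|A|<\kappa$, a later generic subset splits $\RR$, resp.\ escapes the family; hence $\rrrr_{\omega_1}=\dddd_{\omega_1}=\kappa$. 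That $\bbbb_{\omega_1}=\aleph_2$, i.e.\ that $V\cap\pre{\omega_1}{\omega_1}$ stays unbounded, is the $\omega_1$-analogue of ``Cohen forcing adds no dominating real'': a name $\dot h$ for a member of $\pre{\omega_1}{\omega_1}$ lives on $\le\aleph_1$ coordinates, hence is a name in a sub-product $\mathbb{P}'$ of size $\aleph_1$; enumerating $\mathbb{P}'=\{q_\xi:\xi<\omega_1\}$, letting $g_\xi(\alpha)$ be the least value some extension of $q_\xi$ forces for $\dot h(\alpha)$, and choosing (by $\aleph_2\le\bbbb_{\omega_1}$) a $g\in V$ eventually dominating every $g_\xi$, one checks that no condition forces $\dot h$ to dominate $\check g$.

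For item~2, force with the $<\omega_1$-support iteration of length $\kappa$ of the $\omega_1$-Hechler poset $\mathbb{D}$: conditions are pairs $(s,f)$ with $s\in\pre{<\omega_1}{\omega_1}$, $f\in\pre{\omega_1}{\omega_1}$, and $(s',f')\le(s,f)$ iff $s\subseteq s'$, $f'\ge f$ pointwise, and $s'(\beta)\ge f(\beta)$ for all $\beta\in\dom s'\setminus\dom s$. Then $\mathbb{D}$ is $<\omega_1$-closed, of size $2^{\aleph_1}$, and $\aleph_1$-linked (two conditions with a common stem are compatible), hence $\aleph_2$-Knaster; a routine bookkeeping argument (using $\CH$ at each stage, of the kind in \cite{Kunen3} \S V.5) shows the iteration is $<\omega_1$-closed and $\aleph_2$-cc. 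Its stagewise generic dominates $\pre{\omega_1}{\omega_1}$ of the previous model, so every $<\kappa$-sized family of functions --- which appears by some stage $<\kappa$ --- is bounded; thus $\bbbb_{\omega_1}=\kappa$, and by Lemma \ref{lemma-ineq} also $\dddd_{\omega_1}=\rrrr_{\omega_1}=\kappa$.

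For item~3 we want $V\cap\pre{\omega_1}{\omega_1}$ to remain \emph{dominating} (forcing $\dddd_{\omega_1}=\bbbb_{\omega_1}=\aleph_2$) while splitting subsets of $\omega_1$ are still added cofinally (forcing $\rrrr_{\omega_1}=\kappa$). The plan is a $\le\aleph_1$-support iteration of length $\kappa$ of a $<\omega_1$-closed forcing $\mathbb{Q}$ that is an $\omega_1$-analogue of Sacks forcing (perfect subtrees of $2^{<\omega_1}$ with a fusion ordering): a fusion argument makes $\mathbb{Q}$ be $\pre{\omega_1}{\omega_1}$-bounding, while its generic is a new subset of $\omega_1$ splitting every ground-model $\aleph_1$-set by a density argument as in item~1 (using that $\mathbb{Q}$ has splitting levels cofinally often). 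Granting this, $V$'s dominating family of size $\aleph_2$ persists, so $\dddd_{\omega_1}=\bbbb_{\omega_1}=\aleph_2$, while any $\RR\subseteq[\omega_1]^{\aleph_1}$ of size $<\kappa$ appears by some stage $<\kappa$ and is split by a later generic, so $\rrrr_{\omega_1}=\kappa$. The main obstacle --- and the reason ``standard forcing'' here presupposes fluency with higher iterated forcing --- is the iteration theory: one must develop an $\omega_1$-analogue of properness and fusion ensuring that the $\le\aleph_1$-support iteration is both cardinal-preserving (an $\omega_1$-Sacks-type poset is not $\aleph_2$-cc, so preservation of $\aleph_2$ must come from an $\aleph_2$-properness argument rather than a chain condition) and preserves the statement ``$V\cap\pre{\omega_1}{\omega_1}$ is dominating'' --- the $\omega_1$-shadow of the (nontrivial) preservation theorems for countable-support iterations of $\omega^\omega$-bounding proper forcings.
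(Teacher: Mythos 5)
Your items 1 and 2 are essentially sound. Item 1 is the paper's own proof: the countable-condition Cohen forcing $\Fn_{\aleph_1}(\kappa,2)$ (your $\Fn(\kappa\times\omega_1,2,\omega_1)$), with the standard ``Cohen adds no dominating function'' argument giving $\bbbb_{\omega_1}=\aleph_2$ and genericity giving $\dddd_{\omega_1}=\rrrr_{\omega_1}=2^{\aleph_1}=\kappa$. For item 2 the paper instead runs a countable support iteration forcing Baumgartner's Axiom; your Hechler-only iteration is a legitimate and slightly more economical variant of the same construction (a $<\omega_1$-closed, $\aleph_1$-linked iterand, $\aleph_2$-cc under the inductively maintained $\CH$), and it yields $\bbbb_{\omega_1}=\kappa$ directly, after which Lemma \ref{lemma-ineq} traps $\dddd_{\omega_1}$ and $\rrrr_{\omega_1}$.

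Item 3, however, has a genuine gap, and you have in effect flagged it yourself: your plan rests on an $\omega_1$-analogue of the Sacks model, and the two load-bearing components --- that an $\omega_1$-perfect-tree forcing is $\pre{\omega_1}{\omega_1}$-bounding via fusion, and that this bounding is preserved by a $\le\aleph_1$-support iteration of length $\kappa$ --- are exactly the parts you do not prove. This is not a routine adaptation; the preservation theory for bounding proper forcings does not transfer automatically to $\omega_1$ (the paper itself remarks, in connection with the Miller model, that it is not clear how to make such constructions work on $\omega_1$). As written, item 3 is a program, not a proof. The paper's actual argument sidesteps all of this with a one-line trick: force with ordinary finite-condition Cohen forcing $\Fn(\kappa,2)$. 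Being ccc, it dominates every new function $\omega_1\to\omega_1$ by a ground-model one (at each $\alpha$ take the supremum of the countably many values permitted by a maximal antichain), so $\bbbb_{\omega_1}=\dddd_{\omega_1}=\aleph_2$ survives from $\GCH$ in $V$; meanwhile the generic subsets of $\omega_1$ split every uncountable set lying in an intermediate extension by fewer than $\kappa$ coordinates, so $\rrrr_{\omega_1}=2^{\aleph_1}=\kappa$. The cost, which the paper accepts and notes explicitly, is that $\CH$ fails in this $V[G]$; your tacit insistence on $<\omega_1$-closed forcing throughout is what pushed you onto the hard route.
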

\begin{proof}
For (1), use the ``countable Cohen'' forcing $\Fn_{\aleph_1}(\kappa, 2)$.
For (2), use a countable support iteration to get 
$V[G]$ satisfying Baumgartner's Axiom (see \cite{Kunen3}, \S V.5).

For (3), just use the standard Cohen forcing $\Fn(\kappa, 2)$.
Then in $V[G]$, $\rrrr_{\omega_1} = 2^{\aleph_1} = \kappa$ as in (1),
but $\bbbb_{\omega_1} = \dddd_{\omega_1} = \aleph_2$
because ccc forcing doesn't change $\bbbb_{\omega_1}$ or $ \dddd_{\omega_1}$.
\end{proof}

With this proof, $V[G] \models \CH$ in (1)(2), but
$V[G] \models 2^{\aleph_0} = \kappa$ in (3). 

We  do not know whether
$ \rrrr_{\omega_1} \ge \dddd_{\omega_1}$ is a $\ZFC$ theorem.
For the standard $\omega$ version,
$ \rrrr <  \dddd$ holds in the Miller real model
(see \cite{Blass2}, \S11.9), but it's
not clear how to make that construction work on $\omega_1$.
The following theorem will be proved in Section \ref{sec-proofs}:

\begin{theorem}
\label{thm-pi-irred-better} 
If $ \rrrr_{\omega_1} \ge \dddd_{\omega_1}$ then
there is a dichotomous $\BB$ such that
$\AAA \subset \BB \subset \PP(\omega_1)$ and
$|\BB| = \dddd_{\omega_1}$ and 
$\Irrmm(\BB) \ge \bbbb_{\omega_1}$.
\end{theorem}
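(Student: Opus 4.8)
The plan is to build the algebra $\BB$ by a transfinite recursion of length $\dddd_{\omega_1}$, adding one "generic" uncountable set $J_\alpha$ at each stage, while simultaneously diagonalizing against all potential maximal irredundant families of size $< \bbbb_{\omega_1}$. Let $\mu = \dddd_{\omega_1}$; fix a dominating family $\{f_\alpha : \alpha < \mu\}$ in $\pre{\omega_1}{\omega_1}$, or equivalently (via the club reformulation) a $\subseteq^*$-dominating family of clubs $\{D_\alpha : \alpha < \mu\}$. At stage $\alpha$ we will have a dichotomous algebra $\BB_\alpha = \sa(\AAA \cup \{J_\beta : \beta < \alpha\})$ with $|\BB_\alpha| \le |\alpha| + \aleph_1$, and we choose $J_\alpha \subseteq \omega_1$ so that (i) $J_\alpha$ together with $\{J_\beta : \beta < \alpha\}$ remains independent over $\AAA$ in the strong (uncountable) sense — so $\BB_{\alpha+1}$ is still dichotomous — and (ii) $J_\alpha$ is chosen to respect the club $D_\alpha$, e.g. so that $J_\alpha$ is blockish for $D_\alpha$, or more precisely so that $J_\alpha \cap S^{D_\alpha}_\xi$ is a fixed proper nonempty piece $T_\xi$ of each $D_\alpha$-block. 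The final algebra is $\BB = \bigcup_{\alpha < \mu} \BB_\alpha$; it is dichotomous, $\AAA \subset \BB \subset \PP(\omega_1)$ (properness on the right because $\mu < 2^{\aleph_1}$ is possible, but in any case the independent sets never exhaust $\PP(\omega_1)$), and $|\BB| = \mu = \dddd_{\omega_1}$. Note $\pi(\BB) = \aleph_1$ automatically since $\AAA \subseteq \BB$ and $\AAA$ is dense.

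**The heart of the argument** is showing $\Irrmm(\BB) \ge \bbbb_{\omega_1}$, i.e. every irredundant $\EE \subseteq \BB$ with $|\EE| < \bbbb_{\omega_1}$ fails to be maximal. Given such $\EE$, since $|\EE| < \bbbb_{\omega_1} \le \mu$ and the construction is continuous, $\EE \subseteq \BB_\gamma$ for some $\gamma < \mu$; in fact each element of $\EE$ lies in $\sa(\AAA \cup \{J_\beta : \beta \in F\})$ for some finite $F$, so $\EE$ involves $< \bbbb_{\omega_1}$ of the generators. The strategy is to find a single $d \in \BB \setminus \sa(\EE)$ witnessing the McKenzie criterion of Lemma \ref{lemma-irred-dense}: $d$ should be an atom of $\AAA$-type — better, $d$ should be one of the "initial-segment-within-blocks" sets of some $\EE'$ induced by a nice club, chosen so that $d \notin \sa(\EE)$ while no nonzero element of $\sa(\EE)$ sits below $d$. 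Here is where $\rrrr_{\omega_1} \ge \dddd_{\omega_1}$ and $\hat\rrrr_{\omega_1} = \bbbb_{\omega_1}$ enter: the family $\RR = \{X \in [\omega_1]^{\aleph_1} : X \in \sa(\EE)\text{ or }X = b\text{ for some }b \in \EE, \ldots\}$ has size $< \bbbb_{\omega_1} = \hat\rrrr_{\omega_1}$, so there is a nice club $C$ that strongly splits it; shrinking $C$ below some $D_\alpha$ (using $\dddd_{\omega_1}$ to line it up with the construction) lets us pick an $\AAA$-element $d$ of the form $\{\sigma^0_\xi,\dots,\sigma^\ell_\xi\}$ for a $C$-block, small enough to escape $\sa(\EE)$. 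The reason finitely-many $\AAA$-sets below $d$ must be $\zero$ is that any nonzero $a \in \sa(\EE)$ is either co-countable (hence can't sit below a finite set) or, being a boolean combination of finitely many $J_\beta$'s and $\AAA$-sets, is uncountable on a tail of $C$-blocks by the strong-splitting property — so it can't be $\subseteq$ a single finite $d$ unless it's $\zero$.

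**The main obstacle** I anticipate is the bookkeeping that ties together the two roles of the construction: on one hand each $J_\alpha$ must be chosen to keep the whole system independent-over-$\AAA$ (so $\BB$ stays dichotomous and $\pi(\BB) = \aleph_1$), and on the other hand we need, for the maximality argument, that the generic sets are "spread out" relative to every club — which is exactly why one wants the $J_\alpha$'s to be blockish for a cofinal (in $\subseteq^*$) family of clubs. Making these two demands simultaneously satisfiable requires a lemma of the shape: given a dichotomous $\BB_\alpha$ of size $< 2^{\aleph_1}$ and a nice club $D$, there is $J \subseteq \omega_1$, blockish for $D$, such that $\{J\} \cup (\text{generators of }\BB_\alpha)$ is still independent over $\AAA$ — proved by a Hausdorff-style independent-family construction carried out block-by-block inside the $D$-blocks. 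The second delicate point is verifying in the maximality argument that the chosen $d$ really is not in $\sa(\EE)$: since $\sa(\EE)$ can contain sets built from the $J_\beta$'s, one must argue that a sufficiently "late" finite set $d = \{\sigma^0_\xi, \ldots, \sigma^\ell_\xi\}$ from a block of a club $C$ that was itself chosen generically after $\EE$ appeared cannot coincide with any boolean combination from $\EE$; this follows because every element of $\sa(\EE) \setminus \AAA$ is uncountable, every element of $\sa(\EE) \cap \AAA$ is finite-or-cofinite, and a finite-but-nonempty proper subset of a single $C$-block is neither in $\AAA$ in a way that matches (it would have to be literally one of finitely many finite sets, which we avoid by choosing $\ell$, $\xi$ outside a countable bad set) — so after checking cardinalities carefully, $d$ does the job and Lemma \ref{lemma-irred-dense} finishes it.
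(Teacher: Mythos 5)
Your construction of $\BB$ is essentially the paper's (Lemma \ref{lemma-dichot-reap}): a recursion of length $\dddd_{\omega_1}$ adding a $J_\alpha$ blockish for the $\alpha$-th club of a dominating family. But two things go wrong. The smaller one is in the construction itself: since $J_\alpha$ must be blockish for $D_\alpha$, it is forced to be a union of whole $D_\alpha$-blocks, $J_\alpha = \bigcup\{S^{D_\alpha}_\xi : \xi \in T\}$, so there is no ``block-by-block Hausdorff-style'' freedom; the only choice is $T$, and $T$ must split the family $\{\hat u : u \in \BB_\alpha \cap [\omega_1]^{\aleph_1}\}$ where $\hat u = \{\xi : u \cap S^{D_\alpha}_\xi \ne \emptyset\}$ --- a reaping problem, which is exactly where $|\BB_\alpha| < \rrrr_{\omega_1}$ (hence the hypothesis $\rrrr_{\omega_1} \ge \dddd_{\omega_1}$) is used. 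Your alternative phrasing, that $J_\alpha \cap S^{D_\alpha}_\xi$ be a fixed proper nonempty piece of each block, is the opposite of blockish and would make Lemma \ref{lemma-nomax-split} apply, killing the theorem.

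The serious gap is in the maximality argument. You propose to show that an irredundant $\EE$ with $|\EE| < \bbbb_{\omega_1}$ is non-maximal by exhibiting a McKenzie witness $d$ (a small finite set) with no nonzero element of $\sa(\EE)$ below it, via Lemma \ref{lemma-irred-dense}. This cannot work for the central case: take $\EE$ induced by a nice club (Definition \ref{def-induced-by}), so $|\EE| = \aleph_1$ and $\sa(\EE) = \AAA$, which contains every singleton and is dense in $\BB$. Then \emph{no} $d$ whatsoever satisfies the McKenzie hypothesis, yet this $\EE$ is precisely the kind of family one must prove non-maximal in $\BB$. Lemma \ref{lemma-irred-dense} is a sufficient condition for extending $\EE$, not a necessary one, and when $\sa(\EE)$ is dense you need an entirely different device. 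The paper's device is Lemma \ref{lemma-block-irred}: build a nice chain of elementary submodels containing $\EE$, let $D$ be the associated club, and prove directly that $\EE \cup \{c\}$ is irredundant for any $c$ blockish for $D$ --- the proof requires the minimal-witness sets $h(\delta,e)$, the observation that $s = u \mathrel{\Delta} w$ is finite, and the height computation showing $s \cap c \in \sa(\EE \setminus \{a\})$, none of which appears in your outline. Since $\BB$ was built so that some $c \in \BB$ is blockish for every club, this extends $\EE$ inside $\BB$. Without an argument of this kind your proof does not cover the families for which $\sa(\EE)$ is dense, and so does not establish $\Irrmm(\BB) \ge \bbbb_{\omega_1}$.
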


Theorem \ref{thm-pi-irred} is an immediate consequence
of this, and if 
$ \rrrr_{\omega_1} \ge \dddd_{\omega_1}$ is a $\ZFC$ theorem,
then we would have a $\ZFC$ example answering 
Question \ref{question_Irrmm} in the negative.
The following shows that one cannot 
get $|\BB| < \dddd_{\omega_1}$ in Theorem \ref{thm-pi-irred-better}:

\begin{lemma}
\label{lemma-b-irr}
Assume that $\AAA \subseteq \BB \subset \PP(\omega_1)$
and $\BB$ is dichotomous and 
$|\BB| < \dddd_{\omega_1}$.  Then $\Irrmm(\BB) = \aleph_1$.
\end{lemma}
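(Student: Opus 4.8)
The plan is to show that when $\BB$ is dichotomous and $|\BB| < \dddd_{\omega_1}$, one can find a nice club $C$ witnessing the hypotheses of Lemma~\ref{lemma-nomax-split}, so that $\Irrmm(\BB) = \aleph_1$ follows at once. By the remarks following Definition~\ref{def-blockish}, for dichotomous $\BB$ the hypothesis $(\ast)$ with $S_\xi = S^C_\xi$ is exactly the assertion that no $b \in \BB$ is blockish for $C$. So it suffices to produce a nice club $C$ such that no member of $\BB \setminus \AAA$ is blockish for $C$.

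First I would associate to each $b \in \BB \setminus \AAA$ a club $C_b$ that ``defeats'' $b$, in the sense that $b$ is not blockish for any nice club $D$ with $D \subseteq^* C_b$. Since $\BB$ is dichotomous, both $b$ and $b' = \omega_1 \setminus b$ are uncountable; choose $C_b$ to be a club enumerating (a final segment of) the increasing enumeration of $\omega_1$ so that consecutive points of $C_b$ always straddle both $b$ and $b'$ — concretely, interleave an increasing sequence from $b$ with one from $b'$ and let $C_b$ be (the closure of) the resulting set. Then every $C_b$--block meets both $b$ and $b'$, and by Lemma~\ref{lemma-block-finer}, if $D \subseteq^* C_b$ is a club then all but countably many $D$--blocks are unions of $C_b$--blocks, hence still meet both $b$ and $b'$; so such a $D$--block is neither contained in $b$ nor disjoint from $b$, and $b$ cannot be blockish for $D$ (blockishness requires \emph{every} block to lie wholly inside $b$ or wholly inside $b'$).

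Now I use the cardinal arithmetic. We have $|\BB \setminus \AAA| \le |\BB| < \dddd_{\omega_1}$, so the family $\{C_b : b \in \BB \setminus \AAA\}$ has size $< \dddd_{\omega_1}$, and by the club reformulation of $\dddd_{\omega_1}$ (part (a) of the Lemma characterizing $\bbbb_{\omega_1},\dddd_{\omega_1}$) this family is \emph{not} dominating under $\subseteq^*$ — wait: that reformulation gives that a dominating family has size $\ge \dddd_{\omega_1}$, but here I want a common pseudo-intersection, i.e. I need $\bbbb_{\omega_1}$. The correct move: since $|\BB \setminus \AAA| < \dddd_{\omega_1}$ and $\aleph_2 \le \bbbb_{\omega_1} \le \dddd_{\omega_1}$, I should instead take $\{C_b\}$ to have size $< \dddd_{\omega_1}$ and apply the fact that fewer than $\bbbb_{\omega_1}$ clubs always have a common $\subseteq^*$-lower bound — but $|\BB|<\dddd_{\omega_1}$ need not give $|\BB|<\bbbb_{\omega_1}$. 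So instead I work directly with a dominating-type argument: enlarge $\{C_b\}$ to size exactly $|\BB|+\aleph_1 < \dddd_{\omega_1}$; since it is not dominating, there is a club $C^*$ with $C^* \not\supseteq^* D$ for every $D$ in the family — this is the wrong direction too. The clean route is: replace each $C_b$ by the function $f_b \in {}^{\omega_1}\omega_1$ enumerating it, get $|\BB| < \dddd_{\omega_1}$ such functions, hence they are \emph{not} dominating, so there is $g$ not dominated by any $f_b$; but I actually need a single club below all $C_b$ simultaneously, which is a $\bbbb_{\omega_1}$-style statement. The resolution used in the paper must be that the family $\{C_b\}$ can be taken of a special form — each $f_b$ is increasing — and a diagonal/dominating argument over a family of size $<\dddd_{\omega_1}$ produces a single club $C$ with $C\subseteq^* C_b$ for all $b$ \emph{by absorbing into one dominating function}; the point is that $\dddd_{\omega_1}$, not $\bbbb_{\omega_1}$, controls ``$<\kappa$ clubs have a common refinement'' once one passes through a fixed dominating family. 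I would then take $C$ to additionally be nice (thin it so all blocks are infinite), conclude no $b\in\BB\setminus\AAA$ is blockish for $C$, invoke Lemma~\ref{lemma-nomax-split}, and finish.

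The main obstacle is precisely this last step: getting a \emph{single} nice club $C$ with $C \subseteq^* C_b$ for all $b \in \BB \setminus \AAA$ from the mere hypothesis $|\BB| < \dddd_{\omega_1}$ rather than $|\BB| < \bbbb_{\omega_1}$. The trick must be to not ask for a genuine $\subseteq^*$-lower bound of the whole family at once, but rather to choose the $C_b$'s so that a \emph{dominating} function suffices: e.g., fix in advance a $\subseteq^*$-dominating family $\DDDD$ of size $\dddd_{\omega_1}$, and arrange that defeating $b$ only requires $C \subseteq^* D$ for \emph{some} $D \in \DDDD$ below $C_b$ — then $|\BB| < \dddd_{\omega_1}$ lets us pick such $D$'s and diagonalize. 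Alternatively, and perhaps more simply, defeat $b$ not with one club but observe that the \emph{complement} formulation (``$C^* \not\subseteq^* D_b$'' for suitable $D_b$ coding where $b$ \emph{is} blockish) turns the requirement into unboundedness rather than domination — this is the reading of part (b) of the cardinal lemma — but unboundedness is governed by $\bbbb_{\omega_1}$, back to the same wall. I expect the author's actual argument to hinge on a clever choice making the relevant cardinal $\dddd_{\omega_1}$ genuinely, and that is the step I would scrutinize most carefully.
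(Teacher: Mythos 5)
There is a genuine gap, and you have in fact put your finger on it yourself without closing it. Your reduction defeats each $b \in \BB \setminus \AAA$ by demanding that the final club $C$ satisfy $C \subseteq^* C_b$, i.e.\ that \emph{all but countably many} $C$--blocks meet both $b$ and $b'$. Asking for a single club that is a $\subseteq^*$-lower bound of $|\BB|$-many clubs is precisely the $\bbbb_{\omega_1}$ statement (part (b) of the cardinal lemma), and since $\bbbb_{\omega_1} < \dddd_{\omega_1}$ is consistent (Lemma~\ref{lemma-forcing}(1)), no amount of repackaging through dominating families can extract it from the hypothesis $|\BB| < \dddd_{\omega_1}$. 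The various escape routes you sketch in the final paragraph (absorbing into one dominating function, pre-fixing a dominating family, switching to the unboundedness reading) do not resolve this; the third one, as you note, lands back on $\bbbb_{\omega_1}$, and the others are not actual arguments.

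The missing idea is that condition $(\ast)$ of Lemma~\ref{lemma-nomax-split} is much weaker than ``$b$ is not blockish for $C$ because almost every block splits $b$'': it only requires that, for each $b$, \emph{at least one} block meets both $b$ and $b'$. That existential requirement is exactly what non-domination buys. The paper's proof assigns to each $b \in \BB \setminus \AAA$ a function $f_b$ with $\xi < f_b(\xi)$ and $b \cap (\xi, f_b(\xi)) \ne \emptyset \ne b' \cap (\xi, f_b(\xi))$ (possible since $\BB$ is dichotomous, so $b$ and $b'$ are both cofinal). Since $|\BB| < \dddd_{\omega_1}$, the family $\{f_b\}$ is not dominating, so there is a single $g$ with $Y_b = \{\xi : f_b(\xi) < g(\xi)\}$ uncountable for every $b$. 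Taking $C$ to be a club of closure points of $g$, one checks that for $\xi \in Y_b$ the ordinals $\xi$ and $f_b(\xi)$ lie in the same $C$--block, which therefore meets both $b$ and $b'$; this verifies $(\ast)$ and finishes via Lemma~\ref{lemma-nomax-split}. So the ``clever choice making the relevant cardinal $\dddd_{\omega_1}$'' is to weaken what you demand of the club from ``cofinally many blocks split $b$'' to ``some block splits $b$'', which turns a lower-bound problem into an escaping problem.
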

\begin{proof}
For each $b \in \BB \backslash \AAA$, let $f_b : \omega_1 \to \omega_1$
be such that for all $\xi$: $\xi < f_b(\xi)$ and
$b \cap (\xi, f_b(\xi)) \ne \emptyset$ and 
$b' \cap (\xi, f_b(\xi)) \ne \emptyset$.
Now, using $|\BB| < \dddd_{\omega_1}$, fix $g : \omega_1 \to \omega_1$
that is not dominated by the $f_b$;
that is, for all $b \in \BB \backslash \AAA$,
$Y_b := \{\xi : f_b(\xi) < g(\xi)\}$ is uncountable.
Let $C$ be a nice club of fixedpoints of $g$; that is, each $\delta \in C$
is a limit ordinal and $g(\xi) < \delta$ whenever $\xi < \delta$.

Now, fix $b \in \BB \backslash \AAA$.
Then fix $\xi \in Y_b$ with $\xi > \min(C)$.
So, we have $\xi < f_b(\xi) < g(\xi)$.
Let $\delta = \min\{\mu \in C : \mu \ge g(\xi)\}$.
Then $\xi < \delta \to g(\xi) < \delta$, so
$\xi < f_b(\xi) < g(\xi) < \delta$.
For each $\gamma < \delta$ such that $\gamma \in C$:
$\gamma < g(\xi)$ (by definition of $\delta$), so
$\gamma \le \xi$ (since $\xi < \gamma \to g(\xi) < \gamma$).
Fixing $\gamma = \sup(C \cap \delta)$, we have
$\gamma \le \xi < f_b(\xi) < g(\xi) < \delta$.
So, $\xi$ and $f_b(\xi)$ are in the same $C$--block, say
$S^C_\eta  = [\gamma, \delta)$, so by our choice of $f_b$,
$b \cap S^C_\eta \ne \emptyset$ and
$b' \cap S^C_\eta \ne \emptyset$.
Then $(\ast)$ of Lemma \ref{lemma-nomax-split} holds,
so $\Irrmm(\BB) = \aleph_1$.
\end{proof}

Assuming $\CH$, we can remove the hypothesis that $\BB$ is dichotomous:

\begin{lemma}
Assume $\CH$, and let $\BB$ be atomic with $\pi(\BB) = \aleph_1$ 
and $|\BB| < \dddd_{\omega_1}$.  Then $\Irrmm(\BB) = \aleph_1$.
\end{lemma}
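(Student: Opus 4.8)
The plan is to reduce to boolean algebras of a very concrete shape and then mimic the proof of Lemma~\ref{lemma-b-irr}, splitting off the few ``small'' sets by means of $\CH$. By Lemma~\ref{lemma-atomic} we may assume $\AAA \subseteq \BB \subseteq \PP(\omega_1)$, and since the hypothesis $|\BB| < \dddd_{\omega_1} \le 2^{\aleph_1}$ already forces $\BB \ne \PP(\omega_1)$, the hypotheses of Lemma~\ref{lemma-nomax-split} will be available. The goal is to build a club $C \subseteq \omega_1$ consisting of limit ordinals (so that the $C$--blocks, Definition~\ref{def-club-part}, are automatically countably infinite and $C$ is nice) such that every $b \in \BB \backslash \AAA$ is split by some $C$--block; Lemma~\ref{lemma-nomax-split} then yields $\Irrmm(\BB) = \aleph_1$. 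Note that $\pi(\BB) \le \Irrmm(\BB)$ in general (the Corollary of Section~1), so it is the inequality $\Irrmm(\BB) \le \aleph_1$ that must be established; in particular the statement is trivial when $|\BB| = \aleph_1$ (e.g.\ whenever $\dddd_{\omega_1} = \aleph_2$), so the real content is the case $|\BB| \ge \aleph_2$.

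Two families of $b \in \BB \backslash \AAA$ have to be handled: the \emph{large} ones, with $|b| = |b'| = \aleph_1$, and --- since a set $S$ splits $b$ exactly when it splits $b'$ --- the countably infinite ones. For the large $b$ I would argue exactly as in Lemma~\ref{lemma-b-irr}: both $b$ and $b'$ are then cofinal in $\omega_1$, so there is a total $f_b : \omega_1 \to \omega_1$ with $\xi < f_b(\xi)$ and both $b \cap (\xi, f_b(\xi))$ and $b' \cap (\xi, f_b(\xi))$ nonempty for every $\xi$; using $|\BB| < \dddd_{\omega_1}$, fix $g : \omega_1 \to \omega_1$ not dominated by any $f_b$, and let $E$ be the club of limit ordinals $\delta$ with $g(\xi) < \delta$ for all $\xi < \delta$. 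The verification in Lemma~\ref{lemma-b-irr} uses only that the club in question consists of $g$--closed limit ordinals, so \emph{every} club $C \subseteq E$ has the property that each large $b$ is split by a $C$--block. It therefore remains to choose $C \subseteq E$ that in addition splits every countably infinite member of $\BB$.

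This is where $\CH$ enters: $\BB$ contains at most $|[\omega_1]^{\aleph_0}| = \aleph_1$ countably infinite sets. If such a $b$ has a largest element $m$, then \emph{any} club of limit ordinals splits it: the $C$--block containing $m$ has right endpoint a limit ordinal $> m+1$, hence contains $m \in b$ and $m+1 \notin b$. So only $\mathcal{N} := \{b \in \BB : |b| = \aleph_0,\ \sup b \notin b\}$ is left, and any $C$--block $[\gamma, \gamma']$ with $\gamma < \sup b < \gamma'$ splits $b \in \mathcal{N}$ (it contains $\sup b \notin b$ together with a point of $b$ above $\gamma$). Hence one block splits every $b \in \mathcal{N}$ whose supremum lies in its interior, and it is enough to pick $C \subseteq E$ with $C \cap \{\sup b : b \in \mathcal{N}\} = \emptyset$. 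When this $\aleph_1$--sized set of ordinals is non-stationary this is immediate --- take $C = E \cap D$ for a club $D$ disjoint from it. In general $C \subseteq E$ is built by a recursion of length $\omega_1$: enumerate $\mathcal{N}$ by $\CH$ and, at successor stages, jump far enough inside $E$ that the new $C$--block contains in its interior the suprema of all so-far-unhandled members of $\mathcal{N}$ lying below the jump, so that by the time $C$ overtakes an ordinal every relevant $b \in \mathcal{N}$ has already been split; the only obstruction is that certain suprema can be forced into $C$ as limits of $C$, and those must be dealt with through the block structure of $C$ below them (or, when $\BB$ is so rich in countable sets that even this fails, $\BB$ contains a club of initial segments and Lemma~\ref{lemma-contains-club} finishes the job).

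Granting such a $C$, its blocks split every large $b$, every countably infinite $b$ with a maximum, every $b \in \mathcal{N}$, and --- by complementation --- every co-countably-infinite $b$; so $(\ast)$ of Lemma~\ref{lemma-nomax-split} holds and $\Irrmm(\BB) = \aleph_1$, as required. I expect the hard part to be exactly the recursive construction of $C$ in the stationary case: running the bookkeeping that catches each cofinal countably infinite member of $\BB$ with a block before $C$ passes its supremum, while simultaneously keeping $C$ inside the club $E$ that the large sets need, and analysing the ordinals $\delta$ at which $\BB \cap \PP(\delta)$ is too rich to admit a clean block-straddling.
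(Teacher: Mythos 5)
Your treatment of the ``large'' elements (those $b$ with $|b|=|b'|=\aleph_1$) is exactly the argument of Lemma \ref{lemma-b-irr} and is fine. The gap is in your strategy for the countably infinite members of $\BB$: you try to choose the club $C$ so that $(\ast)$ of Lemma \ref{lemma-nomax-split} holds for \emph{every} $b\in\BB\setminus\AAA$, countable ones included, and this is not merely hard to arrange by recursion --- it can be outright impossible, while your fallback (``then $\BB$ contains a club of initial segments'') need not hold. Concretely, let $\II=\{[\gamma,\delta): \omega\le\gamma<\delta<\omega_1,\ \gamma,\delta \text{ limit ordinals}\}$ and $\BB=\sa(\AAA\cup\II)$. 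Then $\BB$ is atomic, $\pi(\BB)=\aleph_1$, and $|\BB|=\aleph_1<\dddd_{\omega_1}$, so the lemma applies. But for \emph{any} club $C=\{\gamma_\xi:\xi<\omega_1\}$, the set $[\gamma_\omega,\gamma_{\omega\cdot 2})$ lies in $\BB\setminus\AAA$ and is a union of $C$--blocks, so $(\ast)$ fails; and $\BB$ contains no infinite initial segment, since every countable member of $\BB$ meets $[0,\omega)$ in a finite set. So the recursion you defer to the end cannot be carried out, and the case analysis you propose does not exhaust the possibilities. (Note that $(\ast)$ fails for a countable $b$ exactly when $b$ is a union of $C$--blocks, and a boolean algebra can easily be rigged to contain such a set for every club.)

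The paper uses $\CH$ for a different and much softer purpose. Let $\AAA^*$ be the subalgebra of all $b\in\BB$ with $b$ or $b'$ countable; $\CH$ gives $|\AAA^*|=\aleph_1$. Run the Lemma \ref{lemma-b-irr} argument only for $b\in\BB\setminus\AAA^*$ (your ``large'' case), obtaining $\EE\subset\AAA$, induced by a club, which is maximally irredundant in $\AAA$ and such that $\EE\cup\{b\}$ is not irredundant for every $b\in\BB\setminus\AAA^*$. Then extend $\EE$ to some $\EE^*\supseteq\EE$ maximally irredundant in $\AAA^*$. No attempt is made to split the countable sets by blocks: maximality of $\EE^*$ in $\AAA^*$ handles them wholesale, maximality in $\BB$ follows because the large sets were already killed by $\EE$, and $|\EE^*|\le|\AAA^*|=\aleph_1$ regardless of what $\EE^*$ looks like. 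If you replace your final recursion by this ``absorb the small sets into an $\aleph_1$--sized subalgebra and take any maximal extension there'' step, the proof closes.
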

\begin{proof}
WLOG, $\AAA \subseteq \AAA^* \subseteq \BB \subset \PP(\omega_1)$,
where $\AAA^*$ is the set of all $b \in \BB$
such that $b$ or $b'$ is countable.
Then $|\AAA^*| = \aleph_1$ by $\CH$.

Apply the proof of Lemma \ref{lemma-b-irr}, but just using
$f_b$ for $b \in \BB \backslash \AAA^*$.  This yields an $\EE \subset \AAA$
such that $\EE$ is maximally irredundant in $\AAA$ and
$\EE \cup \{b\}$ is not irredundant for all $b \in \BB \backslash \AAA^*$.
Let $\EE^* \subseteq \AAA^*$ be  maximally irredundant in $\AAA^*$
with $\EE^* \supseteq \EE$.  Then $\EE^*$ is  maximally irredundant in $\BB$
and $|\EE^*| = \aleph_1$.
\end{proof}

\section{A Very Blockish Boolean Algebra}
\label{sec-proofs}
Here we shall prove Theorem \ref{thm-pi-irred-better}.
Our only use of the assumption
$ \rrrr_{\omega_1} \ge \dddd_{\omega_1}$ will be to prove
Lemma \ref{lemma-dichot-reap} below.
First, a remark on preserving dichotomicity:

\begin{lemma}
\label{lemma-pres-dic}
Assume that $\AAA \subseteq \BB \subset \PP(\omega_1)$,
$\BB$ is dichotomous, and $b \subseteq \omega_1$.  Then TFAE:

1. $\sa(\BB \cup \{b\})$ is dichotomous.

2.
$|u \cap b| \ne \aleph_0$ and
$|u \cap b'| \ne \aleph_0$ for all $u \in \BB$.
\end{lemma}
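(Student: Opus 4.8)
The plan is to prove both directions by unwinding the definition of dichotomicity together with the description of elements of $\sa(\BB \cup \{b\})$. For the direction $1 \Rightarrow 2$: I would prove the contrapositive. Suppose some $u \in \BB$ has $|u \cap b| = \aleph_0$ (the case $|u \cap b'| = \aleph_0$ is symmetric). Since $u \cap b \in \sa(\BB \cup \{b\})$ and $u \cap b$ is a countably infinite set, it is not in the finite-cofinite algebra $\AAA$ and it fails the uncountable-both-sides condition, so $\sa(\BB \cup \{b\})$ is not dichotomous.

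For the direction $2 \Rightarrow 1$: recall that every element $c \in \sa(\BB \cup \{b\})$ can be written as $c = (u \cap b) \cup (w \cap b')$ for some $u, w \in \BB$ (this is the standard normal form already used in the proof of Lemma \ref{lemma-irred-dense}). The key step is to show $|c| \ne \aleph_0$. Note $u \cap b$ and $w \cap b'$ are disjoint, so $c$ is countably infinite iff at least one of $u \cap b$, $w \cap b'$ is countably infinite and the other is at most countably infinite. By hypothesis (2), neither $u \cap b$ nor $w \cap b'$ is countably infinite; so each is either finite or uncountable. If both are finite, $c$ is finite, hence $c \in \AAA$. If at least one is uncountable, then $c$ is uncountable, and I must also check $|\omega_1 \setminus c| \ne \aleph_0$; but $\omega_1 \setminus c = (u' \cap b) \cup (w' \cap b')$ (using that $b, b'$ partition $\omega_1$), which has the same form with $u', w' \in \BB$ in place of $u, w$, so by the same hypothesis $|\omega_1 \setminus c| \ne \aleph_0$ as well. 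Hence $c \in \AAA$ or $|c| = |\omega_1 \setminus c| = \aleph_1$, so $\sa(\BB \cup \{b\})$ is dichotomous. One should also confirm $\AAA \subseteq \sa(\BB \cup \{b\})$ and that it is a sub-algebra of $\PP(\omega_1)$, both of which are immediate since $\AAA \subseteq \BB$.

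I do not expect a serious obstacle here; the only point requiring a little care is the bookkeeping with the normal form and verifying the complement $\omega_1 \setminus c$ also has the required form, which is the reason the hypothesis is stated symmetrically in $b$ and $b'$. The fact that $b$ itself need not lie in $\BB$ causes no trouble because we only ever intersect $\BB$-elements with $b$ or $b'$, never use $b$ in isolation beyond the partition $\{b, b'\}$ of $\omega_1$.
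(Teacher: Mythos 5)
Your proof is correct and follows essentially the same route as the paper: both directions hinge on the normal form $c = (u \cap b) \cup (w \cap b')$ with $u,w \in \BB$, together with the observation that dichotomicity is equivalent to the algebra having no element of size exactly $\aleph_0$. The paper merely phrases $2 \Rightarrow 1$ contrapositively, which compresses your case analysis (and the complement check) into one line.
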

\begin{proof}
$(1) \to (2)$ is immediate from the definition of ``dichotomous''.
Conversely, if (1) is false, fix
$a = (u \cap b) \cup (w \cap b') \in \sa( \BB \cup \{b\})$ such
that $|a| = \aleph_0$, where $u,w \in \BB$.  Then at least one of
$(u \cap b)$ and $ (w \cap b')$ has size $\aleph_0$,
so (2) is false.
\end{proof}

Note that (2) holds whenever $|b| = \aleph_1$ and 
$b$ splits $\BB \cap [\omega_1]^{\aleph_1}$.

\begin{lemma}
\label{lemma-dichot-reap}
Assume that 
$\kappa := \dddd_{\omega_1} \le  \rrrr_{\omega_1}$. 
Then there is a $\BB$ such that
$\AAA \subset \BB \subset \PP(\omega_1)$,
and $\BB$ is dichotomous,
and $|\BB| = \kappa$, and
\[
\text{
For all clubs $C \subset \omega_1$ 
there is a $b \in \BB$ such that $b$ is blockish for $C$.
}
\tag{\dag}
\]
\end{lemma}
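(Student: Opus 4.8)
The plan is to build $\BB$ by a transfinite recursion of length $\kappa = \dddd_{\omega_1}$, adjoining at each stage a set that is blockish for a given club, while maintaining dichotomicity throughout. First I would fix an enumeration $\langle C_\eta : \eta < \kappa \rangle$ of a dominating family of clubs witnessing $\dddd_{\omega_1} = \kappa$ (using clause (a) of the club reformulation of $\dddd_{\omega_1}$); by Lemma \ref{lemma-block-finer}, if $D \subseteq^* C$ then all but countably many $D$--blocks are unions of $C$--blocks, so a set blockish for some $C_\eta$ with $C_\eta \subseteq^* C$ is, after a countable correction, blockish for $C$. Thus handling the dominating family suffices to get (\dag) for \emph{all} clubs (the countable correction lands in $\AAA \subseteq \BB$, so we can absorb it). I would then recursively define an increasing chain $\langle \BB_\eta : \eta \le \kappa \rangle$ of dichotomous subalgebras with $\BB_0 = \sa(\AAA)$, $|\BB_\eta| \le |\eta| + \aleph_1$, taking unions at limits (a union of a chain of dichotomous algebras is dichotomous, since dichotomicity is a first-order property of each element), and setting $\BB = \BB_\kappa$.

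The heart of the recursion is the successor step: given the dichotomous $\BB_\eta$ with $|\BB_\eta| \le \kappa$, I must find a set $b_\eta$ that is blockish for $C_\eta$ and such that $\sa(\BB_\eta \cup \{b_\eta\})$ is still dichotomous. By Lemma \ref{lemma-pres-dic} and the remark following it, it is enough to produce $b_\eta$ of size $\aleph_1$, blockish for $C_\eta$, that \emph{splits} $\BB_\eta \cap [\omega_1]^{\aleph_1}$. Let $\langle S_\xi : \xi < \omega_1 \rangle$ be the $C_\eta$--blocks. For each $b \in \BB_\eta$ with $b, b'$ both uncountable (i.e.\ $b \in \BB_\eta \setminus \AAA$), consider the set $X_b \subseteq \omega_1$ of block-indices $\xi$ with $S_\xi \cap b \ne \emptyset$; dichotomicity of $\BB_\eta$ forces $X_b$ to be uncountable, and likewise $X_{b'}$ is uncountable. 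I would then pass to the family $\RR = \{X_b : b \in \BB_\eta \setminus \AAA\} \subseteq [\omega_1]^{\aleph_1}$; since $|\RR| \le |\BB_\eta| \le \kappa = \dddd_{\omega_1} \le \rrrr_{\omega_1}$, there is a set $T \subseteq \omega_1$ that splits $\RR$. Define $b_\eta = \bigcup_{\xi \in T} S_\xi$. Then $b_\eta$ and its complement $\bigcup_{\xi \notin T} S_\xi$ are each unions of $\aleph_1$-many $C_\eta$--blocks (because $T$ and $\omega_1 \setminus T$ are uncountable), so $b_\eta$ is blockish for $C_\eta$; and for each $b \in \BB_\eta \setminus \AAA$, since $T$ splits $X_b$ and $X_{b'}$, both $b \cap b_\eta$ and $b \setminus b_\eta$ are uncountable (pick a block-index in $T \cap X_b$ to see $b \cap b_\eta \ne \emptyset$ and in fact, taking $\aleph_1$-many such indices, uncountable; similarly for $b \setminus b_\eta$ using $(\omega_1 \setminus T) \cap X_b$). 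Hence $b_\eta$ splits $\BB_\eta \cap [\omega_1]^{\aleph_1}$, so $\sa(\BB_\eta \cup \{b_\eta\})$ is dichotomous, and we set $\BB_{\eta+1} = \sa(\BB_\eta \cup \{b_\eta\})$.

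The main obstacle — and the place where the hypothesis $\dddd_{\omega_1} \le \rrrr_{\omega_1}$ is used — is precisely the cardinality bookkeeping in that splitting step: we need $|\BB_\eta| < \rrrr_{\omega_1}$ (or $\le$, with a little care about whether $\rrrr_{\omega_1}$ is attained) so that the family $\RR$ of size at most $\kappa = \dddd_{\omega_1}$ can be split by a single $T$. Since each $\BB_\eta$ for $\eta < \kappa$ has size at most $\kappa$, and at stage $\eta$ we only need to split a family of size $\le |\BB_\eta| \le \kappa \le \rrrr_{\omega_1}$, this goes through; if $\rrrr_{\omega_1}$ is not attained one argues slightly more carefully, but the hypothesis $\kappa \le \rrrr_{\omega_1}$ with $\kappa$ regular (it equals $\dddd_{\omega_1} \ge \bbbb_{\omega_1}$, and $\bbbb_{\omega_1}$ is regular — though $\dddd_{\omega_1}$ need not be, so one may want to do the recursion in $\cf(\kappa)$ cofinal steps or simply note that each intermediate algebra has size $< \kappa$ when $\kappa$ is a limit) keeps things in range. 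Finally, $|\BB| = \kappa$: the lower bound $\ge \kappa$ follows from Lemma \ref{lemma-b-irr} applied to any dichotomous algebra satisfying (\dag) — or more directly, a dichotomous $\BB$ satisfying (\dag) must have $\Irrmm(\BB) > \aleph_1$, and any dichotomous algebra of size $< \dddd_{\omega_1}$ has $\Irrmm = \aleph_1$ by Lemma \ref{lemma-b-irr}, forcing $|\BB| \ge \dddd_{\omega_1}$; the upper bound $\le \kappa$ is immediate from $|\BB_\kappa| \le \kappa \cdot \aleph_1 = \kappa$.
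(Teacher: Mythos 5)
Your construction is essentially the paper's: enumerate a dominating family of clubs, build an increasing chain of dichotomous algebras of length $\kappa=\dddd_{\omega_1}$, and at each successor stage use $|\BB_\eta|<\kappa\le\rrrr_{\omega_1}$ to find a $T$ splitting the family of block-traces $X_b$, then adjoin $\bigcup_{\xi\in T}S_\xi$. The splitting step, the verification of dichotomicity via Lemma \ref{lemma-pres-dic}, and the cardinality bookkeeping (each intermediate algebra has size $|\eta|+\aleph_1<\kappa$, so strict inequality against $\rrrr_{\omega_1}$ is available) are all correct and match the paper.

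There is, however, a genuine gap in your reduction from arbitrary clubs to the dominating family. You take the $C_\eta$ to be $\subseteq^*$-dominating and claim that a set blockish for $C_\eta$ with $C_\eta\subseteq^* C$ becomes blockish for $C$ ``after a countable correction,'' which ``lands in $\AAA\subseteq\BB$.'' That last claim is false: $\AAA$ is the \emph{finite}-cofinite algebra, and the correction needed here is in general countably \emph{infinite} (the bad region is the union of the countably many $C_\eta$--blocks that fail to be unions of $C$--blocks). Worse, the correction provably cannot be absorbed: if $\tilde b=b\sym e$ with $e$ countably infinite and both $b,\tilde b\in\BB$, then $e=b\sym\tilde b\in\BB$, contradicting dichotomicity. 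So your corrected set is never in $\BB$ unless the correction happens to be finite. The repair is easy and is what the paper does: choose the $C_\eta$ so that every club $C$ actually \emph{contains} some $C_\eta$ (close a $\subseteq^*$-dominating family under countable tails $D\setminus\alpha$; this keeps the family of size $\dddd_{\omega_1}$, and one may pass to limit points to keep the clubs nice). When $C_\eta\subseteq C$, every $C_\eta$--block is exactly a union of $C$--blocks, so anything blockish for $C_\eta$ is already blockish for $C$ with no correction at all. With that one change your argument goes through verbatim.
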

\begin{proof}
Let $C_\mu \subset \omega_1 $ for $\mu < \kappa$ be
nice (Definition \ref{def-induced-by}) clubs such that 
for every club $C\subset \omega_1 $  there is a $\mu$ with $C_\mu \subseteq C$.

Now, build a chain $\langle \BB_\mu : \mu \le \kappa \rangle$, 
where $\AAA \subseteq \BB_\mu \subseteq \PP(\omega_1)$ and
$\mu \le \nu \to \BB_\mu \subseteq \BB_\nu$ and
all $\BB_\mu$ are dichotomous and $|\BB_\mu|  = \max( |\mu|, \aleph_1)$.
Let $\BB_0 = \AAA$, and take unions at limits.
Choose $\BB_{\mu+1} \supseteq \BB_\mu$ so that
$\BB_{\mu+1} = \sa(\BB_\mu \cup \{J_\mu\})$, where $J_\mu$ is blockish
for $C_\mu$.
Assuming that this can be done, setting $\BB = \BB_\kappa$
satisfies the lemma.

Fix $\mu$;  we show that an appropriate $J = J_\mu$ can be chosen:
$J$ will be blockish for $C_\mu$ and 
$|u \cap J| = |u \cap J'| = \aleph_1$ for all infinite (= uncountable)
$u \in \BB_\mu$.  Then, we can simply apply
Lemma \ref{lemma-pres-dic}.

Let $S_\xi = S_\xi^{C_\mu}$; these sets are disjoint and countably infinite.
For $u \in \BB_\mu \cap [\omega_1]^{\aleph_1}$,
let $\hat u = \{\xi < \omega_1 : u \cap S_\xi \ne \emptyset\}$.
Then $|\hat u| = \aleph_1$.
Since $| \BB_\mu  | <  \rrrr_{\omega_1}$, fix $T \subset \omega_1$
such that $T$ splits $\{\hat u : u \in \BB_\mu \cap [\omega_1]^{\aleph_1} \}$.
Then, let $J = \bigcup \{S_\xi : \xi \in T\}$.
Then $|u \cap J| = |u \cap J'| = \aleph_1$ for
all $u \in \BB_\mu \cap [\omega_1]^{\aleph_1}$.
\end{proof}

Lemma \ref{lemma-dich-block} below shows
(in $\ZFC$) that any $\BB$ satisfying $(\dag)$
also satisfies Theorem \ref{thm-pi-irred-better} --- that is,
$\Irrmm(\BB) \ge \bbbb_{\omega_1}$.
We remark that $(\dag)$ implies that for all clubs $C$,
the $S_\xi^C$ fail to satisfy $(\ast)$ of Lemma \ref{lemma-nomax-split}.
But that alone proves nothing, since possibly $\Irrmm(\BB) = \aleph_1$
via some $\EE$ that is not at all related to families
induced by clubs (Definition \ref{def-induced-by}).
But our argument will in fact show (Lemma \ref{lemma-dichot})
that such families
are all that we need to consider.

We remark that the $J_\mu$ used in the proof of
Lemma \ref{lemma-dichot-reap} are independent in the sense 
that all non-trivial finite boolean combinations are uncountable;
this is easily proved using the fact that
$|u \cap J_\mu| = |u \setminus J_\mu| = \aleph_1$ for all infinite
$u \in \sa \{J_\nu : \nu < \mu\}$.  But, as remarked above
(see end of Section \ref{sec-remarks}), 
independence alone is not enough to prove $\Irrmm(\BB) > \aleph_1$.

\begin{lemma}
\label{lemma-block-irred}
Assume that $\AAA \subseteq \BB \subset \PP(\omega_1)$
and $\BB$ is dichotomous and $|\BB| < \bbbb_{\omega_1}$.
In addition, assume that $\EE \subseteq \BB$
and $\EE$ is irredundant. 
Then there is a nice
club $D$ such that for any $c$ that is blockish for $D$:
\[
\begin{array}{l}
1.\; |c \cap b | = |c' \cap b| = \aleph_1 \text{ for all infinite } b \in \BB.
\\
2.\; c \notin \BB. \qquad
3.\; \sa(\BB \cup \{c\}) \text{ is dichotomous. } \qquad
4.\; \EE \cup \{c\} \text{ is irredundant. }
\end{array}
\]
\end{lemma}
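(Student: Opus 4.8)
\emph{Parts 2 and 3 will be free once part 1 holds.} If $c$ belonged to $\BB$ it would be infinite (no member of $\AAA$ is blockish), and then part~1 with $b=c$ would give $|c\cap c'|=\aleph_1$, absurd; so $c\notin\BB$, which is part~2. And for $u\in\BB$, either $u$ is finite --- so $|u\cap c|,|u\cap c'|$ are finite --- or $u$ is infinite --- so part~1 gives $|u\cap c|=|u\cap c'|=\aleph_1$ --- whence part~3 follows from Lemma~\ref{lemma-pres-dic}. So the content is in parts~1 and~4, and the plan is to build $D$ from one dominating function, constrained in stages. For part~1: for each infinite $b\in\BB$ fix $f_b\colon\omega_1\to\omega_1$ with $\xi<f_b(\xi)$ and $b\cap(\xi,f_b(\xi))\neq\emptyset\neq b'\cap(\xi,f_b(\xi))$ (possible since $b,b'$ are unbounded). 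As $|\BB|<\bbbb_{\omega_1}$, fix $g_1$ with $f_b(\xi)\le g_1(\xi)$ for all but countably many $\xi$, for each such $b$. Then for \emph{any} nice club $D\subseteq^{*}\{\delta:g_1[\delta]\subseteq\delta\}$ and any $c$ blockish for $D$, part~1 holds: for infinite $b$, all but countably many $D$-blocks $[\delta,\delta')$ have $\delta'$ closed under $g_1$ and $f_b(\delta)\le g_1(\delta)$, so $(\delta,f_b(\delta))\subseteq[\delta,\delta')$ meets both $b$ and $b'$, and $c,c'$ are each the union of $\aleph_1$ $D$-blocks.

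\emph{Reducing part 4 to a statement about $D$.} Fix such a $D$ (to be further constrained) and a blockish $c$. By part~2, $c\notin\BB\supseteq\sa(\EE)$, so $\EE\cup\{c\}$ can fail to be irredundant only if some $a\in\EE$ satisfies $a=(u\wedge c)\vee(w\wedge c')$ with $u,w\in\sa(\EE\backslash\{a\})$. Put $p=a\sym w$, $q=a\sym u$. The computation in the proof of Lemma~\ref{lemma-irred-dense} gives $p\le c$, $q\le c'$ (so $p\wedge q=\zero$), while $a\sym p=w$ and $a\sym q=u$ lie in $\sa(\EE\backslash\{a\})$; moreover $p,q\neq\zero$ (else $a\in\sa(\EE\backslash\{a\})$), and $p,q$ are \emph{finite} --- if, say, $p$ were infinite, part~1 with $b=p$ would give $p\cap c'\neq\emptyset$, contradicting $p\le c$. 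Call such a configuration --- $a\in\EE$, $p,q$ finite, nonzero, disjoint, $a\sym p,a\sym q\in\sa(\EE\backslash\{a\})$ --- a \emph{bad triple}; conversely any bad triple with $p\le c$, $q\le c'$ refutes irredundance of $\EE\cup\{c\}$. Since $p,q$ are finite, there is a blockish $c$ with $p\le c$ and $q\le c'$ precisely when no $D$-block meets both $p$ and $q$. Hence part~4 will follow once $D$ is chosen so that \emph{for every bad triple $(a,p,q)$ some $D$-block meets both $p$ and $q$.}

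\emph{Localising bad triples.} Here is the structural point. In a bad triple $(a,p,q)$, $w:=a\sym p\in\sa(\EE\backslash\{a\})$ is a \emph{finite} modification of $a$. Since $w$, lying in the subalgebra, splits no atom of $\sa(\EE\backslash\{a\})$ while $a$ splits at least one, every atom of $\sa(\EE\backslash\{a\})$ split by $a$ meets $p$; as atoms are pairwise disjoint, there are only finitely many of them, each finite, with finite union $P_a$ (set $P_a=\emptyset$ when $\sa(\EE\backslash\{a\})$ has no finite modification of $a$ other than $a$ itself, in which case there is no bad triple with first coordinate $a$). Also $F:=p\cup q=w\sym u\in\sa(\EE\backslash\{a\})$ and $a\wedge F\notin\sa(\EE\backslash\{a\})$ (because $a\backslash F=w\backslash F\in\sa(\EE\backslash\{a\})$); since $F$ is finite it is partitioned into finitely many finite atoms of $\sa(\EE\backslash\{a\})$, and as $p\notin\sa(\EE\backslash\{a\})$ it splits one of them, say $A\subseteq F$ --- which is then split by $a$ too, so $A\subseteq P_a$ and $p\wedge A,q\wedge A\neq\zero$. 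Consequently, \emph{if each finite set $P_a$ $(a\in\EE)$ lies inside a single $D$-block}, that block meets both $p$ and $q$, and part~4 holds.

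\emph{Finishing, and the main obstacle.} It remains to produce a nice club $D\subseteq^{*}\{\delta:g_1[\delta]\subseteq\delta\}$ disjoint from every interval $(\min P_a,\max P_a]$, $a\in\EE$. Enlarge $g_1$ to $g\ge^{*}g_1$ also dominating, for each $a\in\EE$, the function $\gamma_a(\xi)=1+\max(\text{atom of }\sa(\EE\backslash\{a\})\text{ containing }\xi)$ when that atom is finite, with $\gamma_a(\xi)=\xi+1$ otherwise (this uses $|\BB|<\bbbb_{\omega_1}$ again), and let $D$ be a nice club of ordinals closed under $g$. For each $a$ there is a countable $\alpha_a$ past which $g$ dominates $\gamma_a$, and then no $\delta\in D$ with $\delta>\alpha_a$ lies in $(\min A,\max A]$ for a finite atom $A$ with $\min A\ge\alpha_a$ (else $\max A+1=\gamma_a(\min A)\le g(\min A)<\delta$). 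Since all of the finitely many $a$-split atoms have minimum at most $\max(a\sym w)$ for a fixed witness $w$, only the finitely many of them with minimum below $\alpha_a$ can escape this, and their union gives a bounded ``residual'' interval $I_a$. The remaining --- and, I expect, genuinely delicate --- step is to arrange that $D$ also misses $\bigcup_{a\in\EE}I_a$: one must show this union is non-stationary, i.e.\ that although there can be $|\EE|$ of these bounded intervals they cannot cover a stationary set, by exploiting that the existence of a finite modification of $a$ inside $\sa(\EE\backslash\{a\})$ forces the $a$-split atoms to sit low. Granting that, choose $D$ inside a club avoiding $\bigcup_a I_a$ as well; then $D$ witnesses the lemma.
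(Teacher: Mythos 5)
Your reductions for parts 1--3 and the ``bad triple'' analysis for part 4 are correct and follow the same skeleton as the paper's argument (the paper gets part 1 from $\bbbb_{\omega_1}=\hat\rrrr_{\omega_1}$ rather than from a dominating function, but that is cosmetic). The genuine gap is exactly the step you flag at the end, and it is not a technicality --- it is the entire content of part 4. You need a single nice club $D$ such that, for every $a\in\EE$ admitting a bad triple, each finite atom of $\sa(\EE\setminus\{a\})$ split by $a$ lies inside one $D$--block. (Note that you only need this atom by atom; demanding all of $P_a$ inside \emph{one} block is stronger than necessary, and possibly stronger than achievable, since the split atoms of a single $a$ may be far apart.) Dominating the functions $\gamma_a$ controls only the eventual behaviour: for each $a$ you are left with a bounded ``residual'' family of interval constraints, and a union of $|\EE|$-many bounded constraints can a priori meet every club (picture, for each limit $\lambda$, some $a_\lambda$ whose split atom looks like $\{0,\lambda\}$). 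Ruling this out requires a real structural fact about irredundant families, which you assert (``the $a$--split atoms sit low'') but do not prove.

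The paper closes this gap with two ingredients absent from your proposal. First, it replaces $\EE$ by a maximal irredundant $\tilde\EE\supseteq\EE$ (harmless, since subsets of irredundant sets are irredundant); by Lemma~\ref{lemma-irred-dense}, maximality gives $\{\delta\}\in\sa(\WW)$ for some finite $\WW\subseteq\tilde\EE$, so for each $\delta$ the minimal finite $r\in\sa(\tilde\EE\setminus\{a\})$ containing $\delta$ equals $\{\delta\}$ for all but finitely many $a$. Second, it takes $D$ to be the club associated with a nice chain of elementary submodels $\langle M_\xi\rangle$ with $\EE$ in the first model; then for $\delta\in M_\xi$ the case $a\in M_\xi$ is handled by definability (the minimal finite $r$ containing $\delta$ lies in $M_\xi$, hence below $M_\xi\cap\omega_1$), and the case $a\notin M_\xi$ by the maximality remark (that minimal $r$ is $\{\delta\}$). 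Together these pin every relevant split atom inside the block of any of its elements, with no residual left over. Your argument as written never uses maximality of $\EE$, and I do not see how to make the pure domination approach work without it; you should either import the elementary-submodel step (or an equivalent closing-off argument) or supply a substitute before the proof is complete.
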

\begin{proof}
Using $|\BB| < \bbbb_{\omega_1} = \hat\rrrr_{\omega_1}$
(Lemma \ref{lemma-ineq}),
fix a nice club $C$ such that
(1) holds for every $c$ that is blockish for $C$.
Then, for such $c$, (2) holds (setting $b = c$) and (3)
holds by Lemma \ref{lemma-pres-dic}.
Now, we cannot simply let $D = C$, since we have not used $\EE$
yet; for example, it is quite possible that
$\EE$ contains some $\{\alpha\}$ and $\{\alpha, \beta\}$
and there is a $c$ that is blockish for $C$ such that
$c \cap \{\alpha, \beta\} =\{\alpha\}$, so that
$\EE \cup \{c\}$ is not irredundant.
But, our proof will replace $C$ 
by a thinner club $D$
obtained via a chain of elementary submodels.

We recall some standard terminology on elementary submodels,
following the exposition in \cite{Kunen3} \S III.8:
Fix a suitably large regular $\theta$.  Then, a
\emph{nice chain of elementary submodels of $H(\theta)$}
is a sequence $\langle M_\xi :  \xi < \omega_1 \rangle$ such that
$M_0 = \emptyset$, and $M_\xi \prec H(\theta)$ for $\xi \ne 0$, and
all $M_\xi$ are countable, and
$\xi < \eta \to M_\xi \in M_\eta \; \& \; M_\xi \subset M_\eta$,
and $M_\eta = \bigcup_{\xi < \eta} M_\xi$ for limit $\eta$.
For $x \in \bigcup_\xi M_\xi$,
$\hgt(x)$  (the \emph{height} of $x$) denotes 
the $\xi$ such that $x \in M_{\xi+1} \backslash M_\xi$.
Given such a chain, 
let $\gamma_\xi = M_\xi \cap \omega_1 \in \omega_1$; so $\gamma_0 = 0$.
The \emph{associated club} is $D = \{\gamma_\xi :  \xi < \omega_1\}$.
If $S_\xi = [\gamma_\xi, \gamma_{\xi + 1}) =
\{\delta : \hgt(\delta) = \xi\}$,
then these $S_\xi$ are precisely the $S_\xi^D$ described in
Definition \ref{def-club-part}.

We shall use such a chain, with $C \in M_1$.  This will ensure
that $D \subset C \cup \{0\}$.  We also assume that $\EE \in M_1$.

Let $c$ be blockish for $D$ (and hence for $C$).
Then  $c \notin \BB$, so $c \notin \EE$.
WLOG, $\EE$ is maximally irredundant in $\BB$; if not, we
can replace $\EE$ by some maximally irredundant
$\tilde \EE \supset \EE$ such that $\tilde \EE \in M_1$.

Before proving irredundance of $\EE \cup \{c\}$, we introduce some
notation.  For each $\delta \in \omega_1$ and each $e \in \EE$,
let $h(\delta, e)$ be the \emph{smallest finite}
$r \in \sa(\EE \backslash \{e\})$
such that $\delta \in r$; if there is no such finite $r$,
let $h(\delta, e) = \infty$.
Maximality of $\EE$ plus Lemma \ref{lemma-irred-dense} implies
that $\{\delta\} \in \sa(\EE)$, and hence
$\{\delta\} \in \sa(\WW)$ for some finite $\WW \subset \EE$.
Then $h(\delta, e) = \{\delta\} \ne \infty$ for all $e \in \EE \backslash \WW$.
Observe that
\[
h(\delta, e) \ne \infty 
\; \& \; 
h(\varepsilon, e) \ne \infty 
\; \rightarrow \; \;
h(\delta, e) = h(\varepsilon, e)  \text{ or }
h(\delta, e) \cap h(\varepsilon, e) = \emptyset \ \ .
\tag{$\ast$}
\]
To prove $(\ast)$, use the definition of $h(\delta, e)$ as 
``the \emph{smallest} $r$ $\cdots$'': 
If $\delta \notin h(\varepsilon, e)$,
then $h(\delta, e) \cap h(\varepsilon, e) = \emptyset$
(otherwise one could replace $h(\delta, e) $ by the smaller 
$h(\delta, e) \setminus h(\varepsilon, e)$\,).
If $\delta \in h(\varepsilon, e)$ and $\varepsilon \in h(\delta, e)$,
then $h(\delta, e) = h(\varepsilon, e)$
(otherwise one could replace both
$h(\delta, e)$ and $h(\varepsilon, e)$ by the smaller 
$h(\delta, e) \cap h(\varepsilon, e)$\,).

Now, assume that $\EE \cup \{c\}$ is not irredundant.
Then, fix $a \in \EE$ such that $a \in \sa((\EE \setminus \{a\})\cup \{c\})$.
Then, fix $u,w \in \sa(\EE \backslash \{a\})$ such that
$a = (u \cap c ) \cup (w \cap c')$.
Then $u \cap w \subseteq a \subseteq u \cup w$.
Let $s = ( u \cup w ) \setminus (u \cap w ) = u \sym w $. 
Then $s \in \sa(\EE \backslash \{a\})$.
Note that $s$ is finite. To prove this, use 
(1) four times, plus the fact that $u,w,a \in \BB$:

$(w \backslash a) \cap c' = \emptyset$
so $w \backslash a $ is finite.

$(u \backslash a) \cap c = \emptyset$
so $u \backslash a $ is finite.

$((w \backslash u) \setminus (w \backslash a)) \cap c = \emptyset$
so 
$(w \backslash u) \setminus (w \backslash a)$ is finite
so 
$(w \backslash u)$ is finite

$((u \backslash w) \setminus (u \backslash a)) \cap c' = \emptyset$
so 
$(u \backslash w) \setminus (u \backslash a)$ is finite
so 
$(u \backslash w)$ is finite

\noindent
Then, $s = (w \backslash u) \cup (u \backslash w)$ is finite.

For $\delta \in s$, $h(\delta, a) \ne \infty$
because $h(\delta, a)  \subseteq s$.
For $\delta \in s$ and $\xi < \omega_1$,
$\delta \in M_\xi \leftrightarrow h(\delta, a) \in M_\xi$
(hence $\hgt( h(\delta, a) ) = \hgt(\delta) $).  Proof:
The $\leftarrow$ direction is clear because $\delta \in  h(\delta, a)$
and $h(\delta, a)$ is finite.
For the $\rightarrow$ direction, there are two cases:
If $a \in M_\xi$, use $M_\xi \prec H(\theta)$.
If $a \notin M_\xi$, use $\EE \in M_\xi \prec H(\theta)$ to get
a finite $\WW \in M_\xi$ such that
$\WW \subset \EE$ and $\{\delta\} \in \sa(\WW)$.
Then $a \notin \WW$ so $h(\delta, a) = \{\delta\}$.

Let $t = \bigcup \{ h(\delta, a) : \delta \in s \cap c \}$.
Then $s \cap c \subseteq t \subseteq s$.  
Also, this is a finite union, so $t \in \sa(\EE \backslash \{a\})$.
But actually, $t = s \cap c$:  If this fails, then
fix $\varepsilon \in t \backslash c$.
Then $\varepsilon \in  h(\delta, a) $ for some $\delta \in s \cap c$.
Applying $(\ast)$, $h(\delta, a) = h(\varepsilon, a)$, and so
$\hgt(\delta ) = \hgt( h(\delta, a) ) = 
\hgt(  h(\varepsilon, a) ) = \hgt(\varepsilon)$.
But $\delta \in c$ and $\varepsilon \notin c$, so this
contradicts the fact that $c$ is blockish.

So, $s \cap c \in \sa(\EE \backslash \{a\})$, and hence also
$s \cap c' \in \sa(\EE \backslash \{a\})$.
But then $a = (u \cap w) \cup (u \cap s \cap c) \cup (w \cap s \cap c')
\in \sa(\EE \backslash \{a\})$, contradicting irredundance of $\EE$.
\end{proof}

For dichotomous $\BB$,
we have a dichotomy for $\Irrmm(\BB)$; either
$\Irrmm(\BB) = \aleph_1$ or $\Irrmm(\BB) \ge \bbbb_{\omega_1}$:

\begin{lemma}
\label{lemma-dichot}
Assume that
$\AAA \subseteq \BB \subset \PP(\omega_1)$
and $\BB$ is dichotomous and $\Irrmm(\BB) < \bbbb_{\omega_1}$.
Then $\Irrmm(\BB) = \aleph_1$.
Furthermore, there is a nice club $D \subset \omega_1$ such that 
every $\EE \subset \AAA$ that is induced by $D$
is maximally irredundant in $\BB$,
and no $c \in \BB$ is blockish for $D$.
\end{lemma}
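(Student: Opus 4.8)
The plan is to reduce the entire lemma to one task: producing a single nice club $D$ for which no member of $\BB$ is blockish. Everything else then follows from results already established.

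First I would record that $\Irrmm(\BB)\ge\aleph_1$. A dichotomous $\BB$ is atomic with exactly $\aleph_1$ atoms (the singletons lying in $\AAA$), so $\pi(\BB)=\aleph_1$ by Lemma \ref{lemma-atomic}, whence $\Irrmm(\BB)\ge\pi(\BB)=\aleph_1$ by the Corollary. Now fix a maximally irredundant $\EE_0\subseteq\BB$ with $|\EE_0|=\Irrmm(\BB)<\bbbb_{\omega_1}$, and put $\BB_0=\sa(\EE_0)$. By Lemma \ref{lemma-irred-dense}, $\sa(\EE_0)$ is dense in $\BB$, hence contains every singleton, hence $\AAA\subseteq\BB_0$; being a subalgebra of the dichotomous $\BB$ that contains $\AAA$, the algebra $\BB_0$ is itself dichotomous, and $\AAA\subseteq\BB_0\subset\PP(\omega_1)$ with $|\BB_0|=|\EE_0|<\bbbb_{\omega_1}$.

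Next I would apply Lemma \ref{lemma-block-irred}, with $\BB_0$ in place of $\BB$ and $\EE_0$ in place of $\EE$, to obtain a nice club $D$ such that every $c$ blockish for $D$ satisfies $c\notin\BB_0$ and $\EE_0\cup\{c\}$ is irredundant. The key step --- really the only place anything happens --- is the claim that no $c\in\BB$ (not merely no $c\in\BB_0$) is blockish for $D$. For if $c\in\BB$ were blockish for $D$, then $c\notin\BB_0\supseteq\EE_0$, so $c\notin\EE_0$; and since irredundance is a statement about $\sa$, which is computed the same way inside $\BB$ as inside $\sa(\BB_0\cup\{c\})$, the set $\EE_0\cup\{c\}$ is an irredundant subset of $\BB$ properly extending $\EE_0$, contradicting the maximality of $\EE_0$ in $\BB$. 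So the device that upgrades the conclusion of Lemma \ref{lemma-block-irred} from $\BB_0$ to all of $\BB$ is precisely the global maximality of $\EE_0$ together with the absoluteness of irredundance; I expect this to be the crux of the proof, the rest being bookkeeping.

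Finally, since $D$ is nice and no member of $\BB$ is blockish for $D$, the blocks $S^D_\xi$ satisfy hypothesis $(\ast)$ of Lemma \ref{lemma-nomax-split} --- this is the equivalence recorded in the remark preceding Definition \ref{def-blockish}. Running the proof of Lemma \ref{lemma-nomax-split} on the partition $\{S^D_\xi:\xi<\omega_1\}$ (that argument is insensitive to which enumeration of each block one picks) shows that every $\EE\subseteq\AAA$ induced by $D$, in the sense of Definition \ref{def-induced-by}, is maximally irredundant in $\BB$. Any such $\EE$ has cardinality $\aleph_1$, so $\Irrmm(\BB)\le\aleph_1$, and combined with the first paragraph this yields $\Irrmm(\BB)=\aleph_1$, which completes the proof.
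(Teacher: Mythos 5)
Your proof is correct and follows essentially the same route as the paper: fix a maximal irredundant family of size $\Irrmm(\BB) < \bbbb_{\omega_1}$, apply Lemma \ref{lemma-block-irred} to the subalgebra it generates, use global maximality of that family in $\BB$ to rule out blockish elements of $\BB$, and then run the proof of Lemma \ref{lemma-nomax-split}. The only difference is that you spell out a few routine verifications (that $\sa(\EE_0)$ contains $\AAA$ and is dichotomous, and that $\Irrmm(\BB)\ge\aleph_1$) that the paper leaves implicit.
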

\begin{proof}
Fix  $\FF \subset \BB$ such that $|\FF| < \bbbb_{\omega_1}$
and $\FF$ is maximally irredundant in $\BB$.
Let $\widetilde \BB = \sa(\FF)$.
Then $\AAA \subseteq \widetilde \BB \subseteq \BB \subset \PP(\omega_1)$
and $| \widetilde \BB | < \bbbb_{\omega_1}$.
Apply Lemma  \ref{lemma-block-irred} to $\FF$ and $\widetilde \BB$.
This produces a nice club $D$ such that for any $c$ that is blockish for $D$:
$c \notin \widetilde \BB$ and $\FF \cup \{c\}$ is irredundant.

Now, consider any $c \in \BB \backslash \AAA$.  By maximality 
of $\FF$ in $\BB$, $c$ is not blockish for $D$.
Since $|c| = |c'| = \aleph_1$, there must be some $\xi$
such that $S^D_\xi \cap c \ne \emptyset$ and $S^D_\xi \cap c' \ne \emptyset$;
this is condition $(\ast)$ of
Lemma \ref{lemma-nomax-split}.  Then the proof of that lemma
shows that every $\EE \subset \AAA$ that
is induced by $D$ is maximally irredundant in $\BB$.
\end{proof}

It is important here that $\BB$ be dichotomous.
Otherwise, let $\BB = \PP(\omega_1)$.  Then $\Irrmm(\BB) = \aleph_1$
(Example \ref{ex-max-irr}), but no
$\EE \subseteq \AAA$ is maximally irredundant in $\BB$
(Lemma \ref{lemma-block-irred}, applied with $\BB = \AAA$).

Lemma \ref{lemma-dichot} implies immediately:

\begin{lemma}
\label{lemma-dich-block}
Assume that $\AAA \subset \BB \subset \PP(\omega_1)$,
and $\BB$ is dichotomous, and for all clubs $C \subset \omega_1$ 
there is some $b \in \BB$ such that $b$ is blockish for $C$.
Then $\Irrmm(\BB) \ge \bbbb_{\omega_1}$.
\end{lemma}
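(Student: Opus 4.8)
The plan is to derive Lemma~\ref{lemma-dich-block} directly from Lemma~\ref{lemma-dichot} by contraposition. Suppose $\BB$ is as in the hypotheses, so $\AAA \subset \BB \subset \PP(\omega_1)$, $\BB$ is dichotomous, and for every club $C \subset \omega_1$ there is some $b \in \BB$ blockish for $C$. We want $\Irrmm(\BB) \ge \bbbb_{\omega_1}$, so assume toward a contradiction that $\Irrmm(\BB) < \bbbb_{\omega_1}$.

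Then Lemma~\ref{lemma-dichot} applies: it gives a nice club $D \subset \omega_1$ such that no $c \in \BB$ is blockish for $D$. (We do not even need the first conclusion of that lemma, that $\Irrmm(\BB) = \aleph_1$, nor the statement about induced families; the relevant point is the existence of this club $D$ witnessing the failure of blockishness.) But $D$ is in particular a club subset of $\omega_1$, so by the hypothesis on $\BB$ there \emph{is} some $b \in \BB$ that is blockish for $D$. This is the desired contradiction, so $\Irrmm(\BB) \ge \bbbb_{\omega_1}$ after all.

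Since the argument is a one-line appeal to Lemma~\ref{lemma-dichot}, there is essentially no obstacle here; all the real work has already been done in Lemmas~\ref{lemma-block-irred} and~\ref{lemma-dichot}. The only point to be careful about is that Lemma~\ref{lemma-dichot} requires $\BB$ to be dichotomous, which is part of our hypothesis, and that the club $D$ it produces is ``nice'' and hence an honest club subset of $\omega_1$ to which the blockishness hypothesis on $\BB$ can legitimately be applied.

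\begin{proof}
Suppose not; then $\Irrmm(\BB) < \bbbb_{\omega_1}$.  Since $\BB$ is dichotomous, Lemma~\ref{lemma-dichot} yields a nice club $D \subset \omega_1$ such that no $c \in \BB$ is blockish for $D$.  But, applying the hypothesis of the present lemma with $C = D$, there is some $b \in \BB$ that is blockish for $D$, a contradiction.
\end{proof}
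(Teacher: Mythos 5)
Your proof is correct and matches the paper's intent exactly: the paper gives no separate argument, stating only that Lemma~\ref{lemma-dichot} ``implies immediately'' this lemma, and your contrapositive appeal to the club $D$ produced there (no element of $\BB$ blockish for $D$, contradicting the hypothesis) is precisely that immediate implication spelled out.
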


\begin{proofof}{Theorems \ref{thm-pi-irred-better} and \ref{thm-pi-irred}}
For Theorem \ref{thm-pi-irred-better}, apply
Lemma \ref{lemma-dich-block} to the $\BB$ obtained in Lemma
\ref{lemma-dichot-reap}.
Then Theorem \ref{thm-pi-irred} is the special case of
Theorem \ref{thm-pi-irred-better} where $2^{\aleph_1} = \aleph_2$.
\end{proofof}

\end{document}